\documentclass[11pt,reqno]{amsart}

\usepackage{amsmath,amsthm}

\usepackage{amsfonts,amssymb,amscd,enumerate,verbatim}

\usepackage[latin1]{inputenc}
\usepackage{latexsym}
\usepackage{hyperref}

\usepackage[shortlabels, inline]{enumitem}

\usepackage{graphicx}
\usepackage{xcolor}

\usepackage{mathptmx}
\def\NZQ{\mathbb}               %

\def\ZZ{{\NZQ Z}}
\def\RR{{\NZQ R}}

\def\frk{\mathfrak}               %

\def\Phi{{\frk N}}

\def\lamb{{\mathbf \lambda}}

\def\opn#1#2{\def#1{\operatorname{#2}}} %

\def\Cc{{\mathcal C}}

\def\Oc{{\mathcal O}}

\def\Cc{{\mathcal C}}

\newtheorem{thm}{Theorem}[section]
\newtheorem{lem}[thm]{Lemma}
\newtheorem{cor}[thm]{Corollary}
\newtheorem{prop}[thm]{Proposition}
\newtheorem{quest}[thm]{Question}

\theoremstyle{definition}

\let\epsilon\varepsilon
\let\phi=\varphi
\let\kappa=\varkappa
\opn\conv{conv}

\newcommand{\TP}[1][P]{\mathbb{DP}(#1)}%

\newcommand{\ol}[1]{\overline{#1}}%

\newcommand\Def[1]{\textbf{#1}}%

\author{Akihiro Higashitani}
\address[A.~Higashitani]{
Department of Pure and Applied Mathematics,
Graduate School of Information Science and Technology,
Osaka University,
Suita, Osaka 565-0871, Japan}
\email{higashitani@ist.osaka-u.ac.jp}

\author{Arnau Padrol}
\address[A.~Padrol]{Departament de Matem\`atiques i Inform\`atica, Universitat de Barcelona, Barcelona, Spain and Centre de Recerca Matem\`atica, Bellaterra, Spain.}
\email{arnau.padrol@ub.edu}

\author{Raman Sanyal}
\address[R.~Sanyal]{Institut f\"ur Mathematik,
Goethe-Universit\"at Frankfurt, Frankfurt am Main, Germany}
\email{sanyal@math.uni-frankfurt.de}

\thanks{
The research of A.H. is supported by JSPS KAKENHI Grant Numbers JP24K00521 and 21KK0043.
The collaboration of A.P. and R.S.~is supported by the
Spanish-German project COMPOTE (AEI PCI2024-155081-2 \& DFG 541393733).
The research of A.P. is also supported by projects PID2022-137283NB-C21 of MICIU/AEI/10.13039/501100011033 / FEDER, UE and PAGCAP
ANR-21-CE48-0020 \& FWF I 5788, as well as the Mar\'i­a de Maeztu program CEX2020-001084-M. Research of R.S.~is also supported by the DFG priority program \emph{Combinatorial Synergies} (SPP 2458) --
539866293, 539866395.}

\begin{document}

\title{Indecomposability of $0/1$-polytopes}

\subjclass[2020]{%
    Primary %
    52B12; %
    Secondary %
    52B20, %
    90C57, %
    05B35, %
    06A07} %
\keywords{0/1-polytope, Minkowski decomposition, indecomposable polytope, deformation polytope, cone, lattice polytope, stable set polytope, matroid polytope, order polytope, flow polytope}

\begin{abstract}
    We prove that every $0/1$-polytope has a unique Minkowski decomposition into
    indecomposable polytopes, up to translation of summands. The summands lie in
    pairwise orthogonal subspaces. Thus, every $0/1$-polytope is the Cartesian
    product of indecomposable $0/1$-polytopes.

    As applications, we obtain uniform combinatorial indecomposability criteria for order and chain
    polytopes, matroid polytopes, stable set and clique polytopes, edge polytopes, flow polytopes,
    and $2$-level/compressed polytopes. We also show that every nontrivial factorization of a
    multi-affine polynomial is a product of multi-affine polynomials in disjoint sets of variables.
\end{abstract}
\maketitle

\section{Introduction}
\noindent
The \Def{Minkowski sum} of two convex polytopes $Q,R \subset \RR^n$ is the polytope
\[
    Q + R \ = \ \{ q + r : q \in Q, r \in R\} \, .
\]
A polytope $P$ is \Def{(Minkowski) decomposable} if there are polytopes $Q,R$ such that $P = Q+R$
  and neither $Q$ nor $R$ are homothetic\footnote{By homothetic we mean related by translation and (positive) dilation.} to $P$. Minkowski decompositions are important in various
  areas. In this case $Q$ and $R$ are called proper (Minkowski) summands of $P$. Research has
  mainly focussed on the space of (homothety-classes) of summands, which are called \emph{type
  cones} in convex geometry~\cite{McMullen-reps,McMullen-simple}, \emph{nef cones} in algebraic geometry~\cite{CoxLittleSchenckToric,Altmann}, and \emph{deformation cones} in geometric combinatorics~\cite{Postnikov,PostnikovReinerWilliams,Ardila, DefConeCubes}.
These cones are notoriously difficult to understand, and special emphasis has been given to
identifying the rays (e.g. in \cite{Gale1954,Shephard,Kallay1982,McMullen1987,PrzeslawskiYost2008,PrzeslawskiYost2016,PP}).
As faces of deformation cones are deformation cones, the rays of deformation cones correspond to
(homothety-classes) of \Def{(Minkowski) indecomposable} polytopes. Indecomposability has been
established for various disparate classes of polytopes of interest in combinatorial optimization,
geometric/algebraic combinatorics, and toric geometry with ingenious arguments within the
respective context \cite{RosenmullerWeidner1973-ExtremeConvexSetFunctions,Ngu86,Ardila,Fourier2016,PPR,LPP}. In this paper we prove indecomposability for a large class of polytopes that
contains remarkably many families of polytopes of combinatorial interest. 

A polytope $P \subset \RR^n$ with vertices $V(P)$ is a \Def{0/1-polytope} if $V(P) \subseteq
\{0,1\}^n$. We call a polytope \Def{proper} if it is not a point.

\begin{thm}\label{thm:main}
    Every $0/1$-polytope $P$ decomposes uniquely as a Cartesian product of
    proper indecomposable $0/1$-polytopes. In particular, $P$ is either indecomposable or a product of proper $0/1$-polytopes.
\end{thm}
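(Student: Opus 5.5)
Suppose $P = Q + R$ with $Q,R$ polytopes. The plan is to show that such a decomposition forces a splitting of the coordinates $[n] = S \sqcup T$ with $P = P_S \times P_T$, where $P_S$ and $P_T$ are the coordinate projections. The summands then have to live, up to translation, in $\RR^S$ and $\RR^T$. Once this structural statement holds, existence of a product decomposition follows by induction on dimension. Uniqueness follows from the standard fact that a partition of the coordinates into blocks is determined by the finest ``product partition''. Products with respect to coordinate partitions are closed under common refinement: if $P = P_S\times P_T = P_{S'}\times P_{T'}$, then $P$ also splits along $S\cap S'$ and the other blocks, because $P$ is the set of all points whose projections lie in the respective projections. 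Hence there is a unique finest partition, and its blocks give the indecomposable factors. I would also discard coordinates that are constant on $P$ (point factors), which gives ``proper''.

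\textbf{Step 1: edges.} Each edge of $P$ decomposes as $e = e_Q + e_R$ into faces with a common normal, and these are parallel segments or points. So every edge direction $u = v-w$ with $v,w\in\{0,1\}^n$ is a $\{0,\pm1\}$-vector. The associated edge of $Q$ is $\lambda_e u$ with $\lambda_e\in[0,1]$, and the one of $R$ is $(1-\lambda_e)u$. The deformation-cone viewpoint says that $Q$ is determined up to translation by the edge-length function $\lambda$, subject to the polygon (2-face) closure conditions.

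\textbf{Step 2: 2-faces of 0/1-polytopes.} The key local lemma is that every 2-face of a $0/1$-polytope is a triangle, a parallelogram, or possibly some other polygon. In any case, for 2-faces that are not parallelograms, the closure condition forces $\lambda$ to be constant on all edges of that face. I would verify that a 2-face is either a triangle or has its edges in linearly ``rigid'' position unless it is a parallelogram $v, v+a, v+b, v+a+b$ with $a,b$ having disjoint supports. I expect this disjoint-support claim to be the main obstacle. For a $0/1$ parallelogram one has $a = e_I - e_J$ and $b = e_K-e_L$ with constraints from $v+a+b\in\{0,1\}^n$. Overlapping supports would require cancellation, which forces extra lattice points and hence non-edges or triangles. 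I would try to prove that if $\mathrm{supp}(a)\cap\mathrm{supp}(b)\neq\emptyset$, then one of $v+a-b$ or the midpoint yields a contradiction with $a$ and $b$ being edges. Alternatively, I would show that such faces are pentagons or hexagons that fail to be zonal, where the constraints again force constant $\lambda$.

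\textbf{Step 3: conclusion.} Consider the graph on edges of $P$ in which two edges are linked if they lie in a common non-parallelogram 2-face, or are opposite in a parallelogram; then $\lambda$ is constant on linked edges. Relate the resulting classes to coordinate supports. Edges in different classes have disjoint supports, by Step 2 and the connectivity of the 2-skeleton. If $Q$ is not homothetic to $P$, then $\lambda$ is nonconstant, which yields a nontrivial partition of the coordinates $S\sqcup T$ such that every edge is supported in $S$ or in $T$. A polytope whose edges are all parallel to $\RR^S$ or $\RR^T$ is the product $P_S\times P_T$. This can be seen by checking, via the graph, that every vertex combination $(x_S,y_T)$ of vertices $x,y$ is a vertex, or more simply by comparing the supporting functions.
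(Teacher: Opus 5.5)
There is a genuine gap in Step~3. The claim that edges in different classes have disjoint supports is the heart of the theorem, and ``Step~2 and the connectivity of the 2-skeleton'' does not prove it. A 2-face only relates two edges lying in it: a triangle puts them in one class, and a parallelogram gives disjoint supports for adjacent edges. Two edges in different classes need not share a 2-face, or even a vertex. Disjointness of supports is not transitive, so it cannot be propagated along chains of 2-faces. There is no local mechanism here; the $0/1$ hypothesis has to be used globally.

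The missing idea, which is what the paper does, is to realize the classes as Minkowski summands and then use integrality.
\begin{itemize}
\item For a class $S$, the vector $1_S$ satisfies the closure conditions of Theorem~\ref{thm:shephard}, because every 2-face is a triangle or a parallelogram. So $1_S$ and $1-1_S$ are the edge-deformation vectors of summands $Q,R$ with $P=Q+R$.
\item After reflecting coordinates, take $0$ to be a vertex of $P$, and translate $Q,R$ so that their corresponding vertices are $0$. Then $Q,R\subseteq P\subseteq[0,1]^n$.
\item By \eqref{eqn:lamb_path} with integer edge vectors, all vertices of $Q$ and $R$ are lattice points, hence $0/1$-vectors.
\item For vertices $q\in Q$ and $r\in R$, the point $q+r$ lies in $P\cap\ZZ^n=\{0,1\}^n$, so $q$ and $r$ have disjoint supports.
\end{itemize}
This gives the coordinate splitting $P=Q\times R$ directly, and it makes your separate ``edges parallel to $\RR^S$ or $\RR^T$'' step unnecessary. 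The paper does this for all classes at once via $\TP=\sum_i[0,1_{S_i}]$. That also yields uniqueness of the Minkowski decomposition itself, not just of coordinate product splittings.

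Step~2 also needs repair, though it is easier. You never establish that only triangles and parallelograms occur. Your fallback, that the closure condition forces $\lambda$ to be constant on every non-parallelogram 2-face, is false for general polygons: a trapezoid is a triangle plus a segment. You need the restriction to triangles and parallelograms both for constancy on classes and for $1_S$ to satisfy the closure conditions. The fact is short to prove. Project a 2-face onto a suitable pair of coordinates so that the projection is injective on its affine hull. The image is a $0/1$-polygon in $\RR^2$, i.e.\ a triangle or the unit square, so the face is a triangle or a parallelogram.

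Conversely, the disjoint-support property for a parallelogram $v,v+a,v+b,v+a+b$, which you flag as the main obstacle, is immediate. If $a_i\neq 0\neq b_i$, then $a_i=b_i=1-2v_i$, so $v_i+a_i+b_i\in\{2,-1\}$, which is impossible. Your uniqueness argument via common refinement of coordinate partitions is fine for coordinate product decompositions.
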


There are many invariants that behave well with respect to taking products and which directly give
sufficient conditions on decomposability. We refer to Section~\ref{sec:01-TP} for definitions or
references to undefined notions in the following list. Observe that by using coordinate projections
we can always assume that $P$ is full-dimensional.

\newcommand\Ehr{\mathrm{Ehr}}%
\begin{cor}
    Let $P \subset \RR^n$ be a full-dimensional $0/1$-polytope. Then $P$ is indecomposable if one of the following conditions holds:
    \begin{enumerate}[\rm(i)]
        \item The $f$-polynomial $f_P(t)$ is irreducible.
        \item The Ehrhart polynomial $\Ehr_P(n)$ is irreducible.
        \item The graph $G_P = ([n],E)$ with edges for every pair of coordinates $ij$ for which
            there is a facet normal $a$ with $a_ia_j \neq 0$ is connected.
    \end{enumerate}
\end{cor}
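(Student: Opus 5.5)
By Theorem~\ref{thm:main}, it suffices to show that under each of the conditions (i)--(iii) the polytope $P$ is not a Cartesian product of two proper $0/1$-polytopes. Throughout, I use the fact that the summands in the product decomposition lie in pairwise orthogonal \emph{coordinate} subspaces. Concretely, the plan is this. Suppose $P$ is decomposable. Then, after permuting coordinates, $P = Q \times R$ with $Q \subset \RR^{k}$ and $R \subset \RR^{n-k}$ proper $0/1$-polytopes and $0<k<n$. Since $P$ is full-dimensional, $Q$ and $R$ are full-dimensional in their respective coordinate spaces, so $\dim Q = k \ge 1$ and $\dim R = n-k \ge 1$. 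In each case I will derive a contradiction from this product structure.

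For (i), I use the standard identity that the faces of $Q\times R$ are exactly the products $F\times G$ of faces, with $\dim(F\times G)=\dim F+\dim G$. With the $f$-polynomial normalized as $f_P(t)=\sum_i f_i(P)\,t^i$, summed over the nonempty faces (the paper's convention in Section~\ref{sec:01-TP}; if the empty face or a shift is included, a similar multiplicative identity still holds), this gives $f_P(t)=f_Q(t)\,f_R(t)$. Each factor has degree $\dim Q\ge1$ and $\dim R\ge1$ respectively, so $f_P$ is reducible, a contradiction.

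For (ii), lattice points factor as well: $(mP)\cap\ZZ^n=\bigl((mQ)\cap\ZZ^k\bigr)\times\bigl((mR)\cap\ZZ^{n-k}\bigr)$. Hence $\Ehr_P(m)=\Ehr_Q(m)\Ehr_R(m)$ for all $m$, and so as polynomials. The two factors have degrees $k\ge1$ and $n-k\ge1$, so $\Ehr_P$ is reducible, again a contradiction.

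For (iii), I use that the facets of $Q\times R$ are $F\times R$ and $Q\times G$, where $F$ and $G$ are facets of $Q$ and $R$. Their normals are $(a,0)$ and $(0,b)$. Since $P$ is full-dimensional, each facet normal is unique up to scaling, so every facet normal of $P$ is supported either in the first $k$ coordinates or in the last $n-k$. Thus no edge of $G_P$ joins the two blocks, and $G_P$ is disconnected, contradicting connectivity.

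The only delicate point is the reduction to the coordinate-product form. Here I need that Theorem~\ref{thm:main} (via the orthogonality statement in the abstract) yields factors in complementary coordinate subspaces, up to translation. This should follow because the factors are $0/1$-polytopes whose supports partition the coordinates. The rest is routine, apart from checking the $f$-polynomial normalization.
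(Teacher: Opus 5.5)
Your proposal is correct and follows the route the paper intends. Theorem~\ref{thm:main}, together with the fact from its proof that the factors sit in complementary coordinate blocks, reduces all three conditions to two facts about coordinate products: $f$- and Ehrhart polynomials are multiplicative, and facet normals are block-supported. One correction to your parenthetical remark: if the empty face is counted, the $f$-polynomial is not multiplicative under products; in that convention Euler's relation makes it divisible by $1+t$, so (i) becomes vacuous, and your nonempty-face normalization is the meaningful one.
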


The proof of the main result as well as the necessary conditions are given in
Section~\ref{sec:01-TP}. We list the most prominent applications of Theorem~\ref{thm:main} and
defer definitions, references, and proofs to Section~\ref{sec:applications}.

\newcommand\Stab{\mathrm{Stab}}%
\newcommand\Cli{\mathrm{Cli}}%
\newcommand\Mat{\mathrm{Mat}}%
\newcommand{\EP}{\mathcal{P}}%
\newcommand{\poset}{\Pi}%
\newcommand{\Flow}{\mathrm{Flow}}%
\begin{cor}\label{cor:consequences}
    The following give combinatorial characterizations of indecomposability:
    \begin{enumerate}[\rm(1)]
        \item \label{item:Oc}%
            Let $(\poset,\preceq)$ be a finite poset. Then the order polytope $\Oc(\poset)$ is indecomposable
            if and only if $\poset$ is a connected poset.
        \item \label{item:Cc}%
            Let $(\poset,\preceq)$ be a finite poset. Then the chain polytope $\Cc(\poset)$ is
            indecomposable if and only if $\poset$ is a connected poset.
        \item \label{item:Stab}%
            Let $G = (V,E)$ be an undirected simple graph. Then the stable set polytope
            $\Stab(G)$ is indecomposable if and only if $G$ is connected.
        \item Let $G = (V,E)$ be an undirected simple graph. Then the clique polytope $\Cli(G)$ is
            indecomposable if and only if the complement graph $\overline{G}$ is connected.
        \item Let $G = (V,E)$ be a connected simple graph. Then the edge polytope $\EP_G$ is
            indecomposable.
        \item \label{item:matroid} Let $M$ be a matroid. The matroid base polytope $B_M$ is indecomposable if and only
            if the independent set polytope $P_M$ is indecomposable if and only if $M$ is
            connected. \item\label{item:perm} Let $H$ be a finite group. The permutation polytope
            $P(H)$ is indecomposable if and only if $H$ is not the product of two proper subgroups.
            In particular the Birkhoff polytope is indecomposable.
        \item Let $D$ be a directed acyclic graph with unique source $s$ and sink $t$. The flow
            polytope $\Flow_D(-1,0,\dots,0,1)$ is indecomposable if and only if $D$ does not have
            an $s-t$ separator of size $1$.

    \end{enumerate}
\end{cor}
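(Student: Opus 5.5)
By Theorem~\ref{thm:main}, a $0/1$-polytope $P$ is indecomposable if and only if it is not a Cartesian product of two proper $0/1$-polytopes. So in each item I only have to decide when the polytope in question splits as a product. I would first isolate a reusable criterion and then check it family by family.

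\textbf{Reduction.} The summands in Theorem~\ref{thm:main} lie in pairwise orthogonal subspaces. For $0/1$-polytopes I expect, and would verify from the proof in Section~\ref{sec:01-TP}, that these subspaces can be taken to be coordinate subspaces, possibly after discarding coordinates that are constant on $V(P)$. This gives the working criterion: $P$ is decomposable if and only if there is a partition $[n]=S\sqcup T$ into parts on which $P$ is non-constant such that
\[
    V(P) \ = \ V(P)|_S \times V(P)|_T .
\]
Equivalently, for all vertices $u,v$ the \emph{mixed} vector $(u_S,v_T)$ is again a vertex.

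\textbf{Easy direction.} If the underlying combinatorial object splits, the vertex set visibly splits. For order polytopes, filters of $\Pi_1\sqcup\Pi_2$ are unions of filters; chain polytopes and stable set polytopes behave the same way. Matroid polytopes factor over the connected components of $M$, and clique polytopes factor over the components of $\overline{G}$. For flow polytopes, an $s$--$t$ separator consisting of a single edge or vertex makes every unit flow a concatenation of two independent path choices. For permutation polytopes, $H=H_1\times H_2$ with $H_i$ acting on their respective parts gives $P(H)=P(H_1)\times P(H_2)$.

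\textbf{Converse direction.} Suppose a nontrivial coordinate partition $S\sqcup T$ exists. If the combinatorial object is connected, some relation joins an element $a\in S$ to an element $b\in T$: a comparability $a\prec b$, an edge $ab$, or a non-edge $ab$. I then mix two vertices to produce a non-vertex.
\begin{itemize}
    \item Order polytope: take the $S$-part of $\mathbf 1_{\uparrow a}$ and the $T$-part of $0$. The result contains $a$ but not $b$, so it is not a filter.
    \item Chain polytope: mix $\mathbf 1_{\{a\}}$ with $\mathbf 1_{\{b\}}$. This gives $\{a,b\}$, which is not an antichain.
    \item Stable set polytope: the same mixing gives a set containing the edge $ab$. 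The clique polytope is handled identically using the non-edge.
    \item Independent set polytope $P_M$: the criterion says $\mathcal I=\mathcal I|_S\times\mathcal I|_T$, i.e.\ $S$ is a separator.
    \item Base polytope $B_M$: every base has the same size, and the $S$-size of a base is determined by its $S$-part alone. Hence $r(S)+r(T)=r(M)$, and $S$ is again a separator.
    \item Flow polytopes: a vertex is an $s$--$t$ path. A product split forces every path to cross a single edge where the $S$-part hands over to the $T$-part, which is a size-$1$ separator.
    \item Permutation polytopes: the coordinates are the entries of permutation matrices. A product split of the vertex set should translate into $H$ being the internal product of the two subgroups acting trivially on the respective parts.
\end{itemize}

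\textbf{Main obstacle.} The hardest part is the lower-dimensional families: edge polytopes, base polytopes, Birkhoff and permutation polytopes, and flow polytopes. There the coordinates satisfy linear relations, and the product structure must be extracted carefully, with the summands' subspaces identified with coordinate blocks modulo those relations. For edge polytopes I would use that $\sum x_i=2$ forces the coordinate sum on $S$ to be constant, equal to $0$, $1$ or $2$. The value $1$ means every edge crosses between $S$ and $T$ and all such pairs occur, so $G$ is complete bipartite $K_{S,T}$ and $\EP_G=\Delta\times\Delta$. This case appears to be a genuine exception to item (5) as literally stated, and I would check the paper's conventions for edge polytopes to see how it is excluded. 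The values $0$ and $2$ force one factor to be constant, hence trivial.
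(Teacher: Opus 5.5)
You correctly flag (5), but your proof of (8) establishes a statement that is false, (6) fails in degenerate cases, and the converses for (7) and (8) are not yet arguments.

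\textbf{Point factors in the easy direction.} Your reduction is the paper's route: Theorem~\ref{thm:main}, whose proof places the factors on complementary coordinate blocks. Your mixing criterion is a cleaner, uniform replacement for the paper's case-by-case product lemmas, and your converses for (1)--(4), $P_M$ and $B_M$ are fine. The problem is the easy direction. Splitting the combinatorial object gives a Cartesian product, but decomposability needs both factors to be non-points. That is exactly the non-constancy you built into your criterion, yet you checked it only for edge polytopes.
\begin{itemize}
\item For (8) this fails, and the item is false as stated. The digraph with arcs $s\to a$, $a\to r$, $s\to r$, $r\to t$ has the separator $\{r\}$, yet $\Flow_D$ is a segment. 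The correct statement: $\Flow_D$ is decomposable iff some $r\neq s,t$ lies on every $s$--$t$ path and there are at least two $s$--$r$ paths and at least two $r$--$t$ paths.
\item In (6), loops and coloops give point factors. $M=U_{1,2}\oplus U_{1,1}$ is disconnected, yet $B_M$ is a segment while $P_M$ is a triangular prism. So loops and coloops must be excluded. The paper's proposition on pure complexes fails for cones for the same reason.
\end{itemize}

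\textbf{Your converse for (8).} As written it is wrong. For two diamonds in series, no edge lies on every path. With three diamonds in series and $S$ the outer two, the $S$-edges of a path are not contiguous. A correct argument:
\begin{itemize}
\item Comparing paths $\rho,\sigma$ with the mixed path shows that the netflow $\partial(\rho\cap S)$ does not depend on $\rho$.
\item Its support is the set of endpoints of the maximal $S$-segments of $\rho$.
\item Non-constancy of $S$ and $T$ gives $\partial(\rho\cap S)\notin\{0,\partial\rho\}$, so this support contains some $r\neq s,t$, which then lies on every path.
\end{itemize}

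\textbf{Your converse for (7).} You use the right reading: a direct product of two nontrivial permutation groups with disjoint supports. For abstract direct products the claim fails; for example, the Klein four-group acting regularly on four points gives a tetrahedron. But ``should translate'' needs an argument. Suppose row $i$ had non-constant entries $(i,j)\in S$ and $(i,j')\in T$. Mixing some $g$ with $g(i)=j$ and some $h$ with $h(i)=j'$ would produce two $1$s in row $i$. The same holds for columns. Hence each orbit block $O\times O$ lies entirely in $S$ or in $T$. The mixing property then says exactly that $H$ is the product of the pointwise stabilizers of the two resulting unions of orbits.

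\textbf{Item (5).} Your doubt is justified, and no convention in the paper excludes it: for $G=C_4=K_{2,2}$, $\EP_G$ is a square. The paper's route through the top face of $\Delta=\{\emptyset\}\cup V\cup E$ can at best detect connectivity of $\Gex(\Delta)$. That graph's edges come from non-edges and triangles of $G$, and it equals $K_m\sqcup K_n$ for $G=K_{m,n}$. Your computation gives the correct statement: for connected $G$, $\EP_G$ is decomposable iff $G=K_{m,n}$ with $m,n\ge 2$.
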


Indecomposability of matroid base and independent set polytopes in \ref{item:matroid} was already known \cite{Ngu86,PP,LPP}. To the best of our knowledge, the other statements in this corollary are new.
Note that \ref{item:Cc} is a special case of \ref{item:Stab}. In (2)--(4) the polytopes in question are \emph{antiblocking} polytopes. 
For $0/1$-antiblocking polytopes as well as for order polytopes we give additional proofs. Whereas the proof of Theorem~\ref{thm:main} heavily relies on geometry, the additional proofs only rely on the underlying combinatorial structures.

\newcommand{\K}{\mathbb{K}}%
\newcommand{\Newt}{\mathrm{Newt}}%
We close this section with a simple but fundamental algebraic implication. Let $\K$ be a field and let $f \in \K[x_1,\dots,x_n]$ be a multivariate polynomial over $\K$. We can write $f$ as
\[
    f \ = \ \sum_{\alpha \in A} c_\alpha x_1^{\alpha_1} x_2^{\alpha_2} \cdots x_n^{\alpha_n}
\]
for some finite subset $A \subseteq \ZZ^n_{\ge0}$ and $(c_\alpha)_{\alpha \in A} \in \K$. The
  \Def{Newton polytope} of $f$ is the lattice polytope $\Newt(f) := \conv\{ \alpha \in A : c_\alpha
  \neq 0 \} \subset \RR^n$. If $f = gh$ for non-zero polynomials $g,h \in \K[x_1,\dots,x_n]$, then
  $\Newt(f) = \Newt(g) + \Newt(h)$. Hence if $f$ is reducible, then $\Newt(f)$ is decomposable.

A polynomial $f$ is \Def{multi-affine} if it is an affine-linear function in each variable.
Equivalently, $f$ is multi-affine if $\Newt(f)$ is a $0/1$-polytope. For $J \subseteq [n]$, let
$f_J$ be the restriction of $f$ to the terms involving only variables in $\{x_i : i \in J\}$. We
set $\ol{J} := [n] \setminus J$.

\begin{cor}
    Let $f \in \K[x_1,\dots,x_n]$ be a multi-affine polynomial. If $f$ is reducible, then there is $J \subset [n]$ such that $f = f_J f_{\ol{J}}$.
\end{cor}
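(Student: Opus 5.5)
We plan to reduce the corollary to Theorem~\ref{thm:main} by passing to Newton polytopes. Suppose $f = gh$ with $g,h$ non-constant. Then $\Newt(f) = \Newt(g) + \Newt(h)$. We first check that $g$ and $h$ are again multi-affine. The degree of $f$ in $x_i$ is the sum of the degrees of $g$ and $h$ in $x_i$, because leading coefficients in $x_i$ multiply to a nonzero element of $\K[x_j : j\neq i]$. Since this degree is at most $1$, each variable occurs in at most one of $g,h$, and there it occurs only affinely. Let $J$ be the set of variables occurring in $g$. Then $g \in \K[x_i : i\in J]$ and $h \in \K[x_i : i \in \ol{J}]$. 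Hence $\Newt(g) \subseteq \RR^J$ and $\Newt(h) \subseteq \RR^{\ol J}$, and $\Newt(f)$ is their Cartesian product.

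This simple degree argument already gives the splitting, so Theorem~\ref{thm:main} is not even needed in full. The one subtlety is that $g$ might involve no variables at all, i.e.\ be constant; this is excluded by reducibility. If $g$ is non-constant, then $J \neq \emptyset$. Likewise $h$ is non-constant, so $\ol J \neq \emptyset$.

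It remains to identify $g,h$ with $f_J, f_{\ol J}$ up to scalars. Write $g = g_0 + g'$ and $h = h_0 + h'$, where $g_0,h_0 \in \K$ are the constant terms and $g'$, $h'$ contain only monomials involving at least one variable. Every monomial of $g'h'$ involves variables from both $J$ and $\ol J$. The monomials of $f$ using only variables in $J$ are therefore those of $h_0 g$, so $f_J = h_0 g$. Similarly $f_{\ol J} = g_0 h$. Consequently
\[
    f_J f_{\ol J} \ = \ g_0h_0\, gh \ = \ g_0 h_0 f .
\]

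The main obstacle is the constant terms: if $g_0 = 0$ or $h_0 = 0$, the identity $f = f_J f_{\ol J}$ fails literally. For example, $f = x_1x_2$ gives $f_{\{1\}} = 0$. We plan to handle this by one of two routes. The first is to read the statement up to a nonzero scalar and to assume $f(0)\neq 0$, in which case $g_0h_0 = f(0)$ and $f = f(0)^{-1} f_J f_{\ol J}$. The second is to normalize $f_J$ as the part of $f$ whose restriction to $\ol J$-variables is a fixed monomial. Concretely, choose $\beta \in \Newt(h)$ a vertex, and take $f_J$ to be the coefficient polynomial of $x^\beta$. Since monomials in disjoint variable sets multiply without cancellation, this coefficient is exactly $c\,g$ for a nonzero scalar $c$. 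Either way, the key point is that $\Newt(f) = \Newt(g)\times\Newt(h)$ with the factors in complementary coordinate subspaces. This is the product structure predicted by Theorem~\ref{thm:main}.
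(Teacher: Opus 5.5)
Your proof is correct, and it is more elementary than the route the paper indicates. The paper gives no separate argument. It presents the corollary as a consequence of Theorem~\ref{thm:main}: $f=gh$ makes $\Newt(f)=\Newt(g)+\Newt(h)$ a decomposable $0/1$-polytope, which then splits as a product in complementary coordinate subspaces. You bypass Shephard's criterion and the deformation cube altogether. Additivity of $\deg_{x_i}$ (valid since $\K[x_j : j\neq i]$ is a domain) shows directly that $g$ and $h$ use disjoint sets of variables.

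In polytope terms this is the observation that $\Newt(g)+\Newt(h)\subseteq[0,1]^n$ with both summands lattice polytopes in $\RR^n_{\ge0}$. It is the orthogonality step $v+v'\in\{0,1\}^d$ at the end of the proof of Theorem~\ref{thm:main}, applied to the actual factors. It is also what the corollary really needs, because knowing only that $\Newt(f)$ is a product does not factor $f$. For example, $1+x_1+x_2+2x_1x_2\in\RR[x_1,x_2]$ is irreducible, yet its Newton polytope is $[0,1]^2$. The paper's route buys the stronger geometric statement that $\Newt(f)$ is indecomposable even with respect to non-lattice summands unless it is a product. For factorization, only lattice summands matter.

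Your computation $f_J=h_0g$, $f_{\ol J}=g_0h$, hence $f_Jf_{\ol J}=f(0)\,f$, is also correct. It shows that the printed identity $f=f_Jf_{\ol J}$ holds only up to a nonzero scalar and with a suitable normalization. For $x_1x_2$ it fails for every $J$. It also fails whenever $f(0)\neq1$, not just when $g_0h_0=0$; for instance $2(1+x_1)(1+x_2)$ gives $f_Jf_{\ol J}=2f$.

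Of your two repairs, only the second works in all cases. Taking coefficients of $x^\beta$ and $x^\alpha$ for vertices $\beta$ of $\Newt(h)$ and $\alpha$ of $\Newt(g)$ gives $f=c^{-1}f_Jf_{\ol J}$, with $c\neq0$ the coefficient of $x^{\alpha+\beta}$ in $f$. The first repair needs the extra hypothesis $f(0)\neq0$. Both come with $\emptyset\neq J\neq[n]$, which the statement needs to have content: for $J=[n]$ and $f(0)=1$ the identity holds trivially.
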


\noindent
\textbf{Acknowledgments.}
AP wants to thank Germain Poullot for many interesting conversations on the topic. RS would like to
thank Katharina Jochemko and Benjamin Schr\"oter for fruitful discussions during the
MFO-Mini-Workshop \emph{Alcoved Polytopes in Physics and Optimization} (2025).

\section{Decomposability and deformation polytopes}

We keep the preliminaries brief and refer to \cite{Gruenbaum,ziegler} for background on Minkowski
sums and \cite{PP} for a recent and more comprehensive account on decomposability.

\newcommand{\inner}[1]{\langle {#1} \rangle}%
Let $P \subset \RR^d$ be a polytope with vertex set $V(P)$ and edge set $E
\subseteq \binom{V(P)}{2}$. For $c \in \RR^d$, let
\[
    P^c = \{ x \in P : \inner{c,x} \ge \inner{c,y} \text{ for all } y \in P\}
\]
be the face in direction $c$. Let $Q$ be a \Def{Minkowski summand} of $P$, i.e., $P=Q+R$ for some
  polytope~$R$. Then for every $c$ such that $P^c = \{p\}$ is a vertex, we have that $Q^c = \{q\}$
  is a vertex. This gives a surjective map $V(P) \to V(Q)$ with $p \mapsto q$. Moreover, if $e =
  pp'$ is an edge of $P$, then there is a scalar $\lambda_e(Q)\in[0,1]$ such that
\[
    q'-q = \lambda_e(Q)(p'-p).
\]
We define the \Def{edge-deformation vector} $\lambda(Q)=(\lambda_e(Q))_{e\in E}\in\RR^E$, which
  determines $Q$ uniquely up to translation. Indeed, for any two vertices $p,p' \in V(P)$, let $p =
  p_0p_1\dots p_k = p'$ be a path in the graph of $P$. Then
\begin{equation}\label{eqn:lamb_path}
    q'  \ = \ q + \sum_{i=1}^k \lambda_{p_{i-1}p_{i}} (p_i - p_{i-1}) \, .
\end{equation}
Realizing that this is independent of the chosen path proves the necessity of the following result
due to Shephard~\cite{Shephard}; see also~\cite[Theorem~15.1.2]{Gruenbaum}
and~\cite[Sect.~5.1]{Inscribed1}.

\begin{thm}\label{thm:shephard}
    Let $P \neq \emptyset$ be a polytope. Then $\lambda \in \RR^E$ is the
    edge-deformation vector of a summand of $P$ if and only if
    \begin{enumerate}[\rm (i)]
        \item $0\le \lambda_e\le 1$ for every $e\in E$, and
        \item for every $2$-face $F$ of $P$ with cyclically-ordered vertices $p_1,\dots,p_k$
            \begin{equation}\label{eq:cycle_equation}
                \lambda_{p_1p_2}(p_2-p_1)+\lambda_{p_2p_3}(p_3-p_2)+\cdots+\lambda_{p_kp_1}(p_1-p_k)=0.
	\end{equation}
    \end{enumerate}
\end{thm}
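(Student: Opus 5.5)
We prove the two directions separately. The \emph{necessity} part was essentially given above. If $Q$ is a summand, with $P=Q+R$, then $\lambda_e(Q)\in[0,1]$: indeed $p'-p=(q'-q)+(r'-r)$, and both differences are nonnegative multiples of $p'-p$, since they are edges or degenerate edges of $Q$ and $R$ in the same normal cone. The cycle equation~\eqref{eq:cycle_equation} says exactly that walking around the boundary of the face $Q^c$ of a $2$-face $F=P^c$ returns to the start. So the real content is \emph{sufficiency}.

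For sufficiency, let $\lambda$ satisfy (i) and (ii). Fix a base vertex $p_0\in V(P)$ and define $\phi(p)$ for each vertex $p$ by formula~\eqref{eqn:lamb_path} along some edge path from $p_0$ to $p$, starting from $\phi(p_0)=0$. The first task is to show that $\phi$ is well defined. For this, view $\lambda$ as a $1$-cochain on the graph of $P$, with values $\lambda_e(p'-p)$ on oriented edges. The cycle space of the graph of a polytope is generated by the boundaries of its $2$-faces. This holds because the $2$-skeleton of $P$ is simply connected: the boundary complex is a sphere of dimension $\ge 2$ when $\dim P\ge 3$, and the cases $\dim P\le 2$ are trivial. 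Condition (ii) says the cochain vanishes on every $2$-face cycle, so it vanishes on all cycles, and $\phi$ is path-independent. Set $Q=\conv\phi(V(P))$, and apply the same construction with $1-\lambda$, which also satisfies (i) and (ii), to get $R=\conv\psi(V(P))$. By construction $\phi(p)+\psi(p)=p$ for all $p$, after normalizing the base points.

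It remains to show that $P=Q+R$. The main obstacle is showing that for a generic $c$ with $P^c=\{p\}$, the point $\phi(p)$ maximizes $\inner{c,\cdot}$ over $Q$. Take any vertex $p'$ and a $c$-monotone decreasing edge path from $p$ to $p'$, which exists by the simplex-method property of polytope graphs. Along each edge $p_{i-1}p_i$ of this path, $\inner{c,p_i-p_{i-1}}\le 0$. Since $\lambda_e\ge 0$, we get $\inner{c,\phi(p_i)-\phi(p_{i-1})}\le 0$, hence $\inner{c,\phi(p')}\le\inner{c,\phi(p)}$; the same argument works for $\psi$ using $1-\lambda\ge 0$. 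Therefore $h_Q(c)+h_R(c)=\inner{c,\phi(p)+\psi(p)}=\inner{c,p}=h_P(c)$ for generic $c$, and by continuity for all $c$. Support functions are additive under Minkowski sums, and a polytope is determined by its support function, so $P=Q+R$.

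Finally, $\phi$ realizes the edge-deformation vector $\lambda$: for each edge $e=pp'$ we have $\phi(p')-\phi(p)=\lambda_e(p'-p)$, and the monotonicity argument shows that $\phi(p)$ is exactly the vertex $Q^c$. Hence $\lambda=\lambda(Q)$, which completes the proof of Theorem~\ref{thm:shephard}.
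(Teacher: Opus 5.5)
Your proof is correct. For necessity it matches what the paper says: the cycle equations are the telescoping of \eqref{eqn:lamb_path} around the boundary of a $2$-face. The paper does not prove sufficiency at all. It attributes the theorem to Shephard~\cite{Shephard} and refers to \cite{Gruenbaum} and \cite{Inscribed1}, so your sufficiency argument is a self-contained replacement for that citation rather than a variant of an argument in the text.

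Your argument has two ingredients, and both are sound.
\begin{enumerate}[(a)]
\item \emph{Integrability of the $\RR^n$-valued $1$-cochain $\lambda_e(p'-p)$.} You get this from the vanishing of $H_1$ of the $2$-skeleton. It can be phrased uniformly in all dimensions: the $2$-skeleton of the face complex of $P$ (containing $P$ itself when $\dim P \le 2$) has the same fundamental group as the contractible cell $P$. Hence every cycle of the graph is an integer combination of $2$-face boundaries, and condition (ii) kills each of these.
\item \emph{A local-to-global convexity step.} You handle this elementarily via $c$-monotone edge paths. Nonnegativity of $\lambda$ and of $1-\lambda$ makes $\phi(p)$ and $\psi(p)$ maximizers of $\inner{c,\cdot}$. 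Then $\phi+\psi=\mathrm{id}$ together with additivity of support functions and density of the vertex normal cones gives $P=Q+R$.
\end{enumerate}
What the citation buys the paper is only brevity; your route costs two standard facts (the cycle space and monotone paths) and nothing more.

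One small point of wording in your last paragraph. The monotone path only gives $\inner{c,\phi(p')}\le\inner{c,\phi(p)}$ weakly, since edges with $\lambda_e=0$ contribute nothing. So monotonicity alone shows $\phi(p)\in Q^c$, not that $Q^c$ is a single point. The identification $Q^c=\{\phi(p)\}$ comes from $P^c=Q^c+R^c$ once $P=Q+R$ is established: $P^c$ is a point, so $Q^c$ is a point, and it must be $\phi(p)$. You should phrase that step this way.
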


The theorem also shows that the set $\TP$ of all edge-deformation vectors of Minkowski summands of
$P$ is a convex polytope, that we call the \Def{deformation polytope} of $P$. Note that a proper polytope
$P$ is indecomposable if and only if $\TP$ is $1$-dimensional. That is, if for every Minkowski
summand $Q$ there is some $\mu\ge 0$ such that $\lambda(Q)=\mu \lambda(P)$.

Following~\cite{PP}, two edges $e,f\in E$ of $P$ are called \Def{dependent} if
$\lambda_e(Q)=\lambda_f(Q)$ for every Minkowski summand $Q$ of $P$, that is, $\TP \subseteq \{
\lambda : \lambda_e = \lambda_f \}$. This defines an equivalence relation $e \equiv f$ on $E$ that
characterizes indecomposability (c.f. {\cite[Lem.~2.2.5]{PP}}).

\begin{cor}\label{cor:equivalence}
    $P$ is indecomposable if and only if~$\equiv$ has a unique equivalence class.
\end{cor}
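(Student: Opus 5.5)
The plan is to prove Corollary~\ref{cor:equivalence} directly from the description of $\TP$ as the set of edge-deformation vectors given by Theorem~\ref{thm:shephard}, together with the remark that a proper polytope $P$ is indecomposable if and only if $\TP$ is $1$-dimensional. Note that $\lambda(P)=\mathbf{1}$, the all-ones vector, and that the zero vector (a point summand) lies in $\TP$. So $\TP$ always contains the segment $[0,\mathbf{1}]$. Indecomposability therefore means $\TP=[0,\mathbf{1}]$, i.e.\ every summand has $\lambda(Q)=\mu\mathbf{1}$ with $\mu\in[0,1]$.

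The forward direction is immediate. If $P$ is indecomposable, every $\lambda\in\TP$ is a multiple of $\mathbf{1}$, so $\lambda_e=\lambda_f$ for all edges $e,f$. Hence all edges are pairwise dependent and $\equiv$ has a single class.

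For the converse, assume all edges are dependent. Then every $\lambda\in\TP$ has all coordinates equal, so $\lambda=\mu\mathbf{1}$ for some $\mu\in[0,1]$. Thus $\TP\subseteq[0,\mathbf{1}]$, and it is $1$-dimensional as soon as $E\neq\emptyset$, i.e.\ as soon as $P$ is proper. By the remark above, $P$ is indecomposable. Concretely, a summand $Q$ with $\lambda(Q)=\mu\mathbf{1}$ satisfies $q'-q=\mu(p'-p)$ along every path, by \eqref{eqn:lamb_path}. Hence $Q=\mu P+t$ for some translation $t$, so $Q$ is homothetic to $P$ (or a point when $\mu=0$).

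The only point requiring care is this last identification. The edge-deformation vector determines $Q$ up to translation, so $\lambda(Q)=\mu\lambda(P)$ forces $Q$ to be a translate of $\mu P$. The degenerate case of a point $P$ (empty $E$, so trivially one class or none) is excluded by properness or handled by convention.
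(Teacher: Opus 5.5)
Your proof is correct and follows the paper's own reasoning. The paper gets the corollary directly from its remark that a proper $P$ is indecomposable if and only if $\TP$ is $1$-dimensional, i.e.\ every $\lambda(Q)$ is a multiple of $\lambda(P)=\mathbf{1}$, which is exactly pairwise dependence of all edges (it cites \cite[Lem.~2.2.5]{PP}); your identification $Q=\mu P+t$ via \eqref{eqn:lamb_path} and your handling of the point case spell out what the paper leaves implicit.
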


In \cite{PP}, several methods to prove edge dependencies are provided. We will only need the
following two (see {\cite[Ex.~2.2.2 and 2.2.3]{PP}}):

\begin{lem}\label{lem:triangleparallelogramdependent}
    If the edges $e,f$ belong to a common triangular $2$-face of $P$, or are opposite edges in a parallelogram $2$-face of $P$, then $e$ and $f$ are dependent.
\end{lem}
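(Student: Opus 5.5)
The plan is to derive both dependencies directly from the cycle equation~\eqref{eq:cycle_equation} in Theorem~\ref{thm:shephard}, using only the necessity direction. That direction holds for the edge-deformation vector $\lambda=\lambda(Q)$ of every Minkowski summand $Q$ of $P$, so it suffices to show $\lambda_e=\lambda_f$ for an arbitrary $\lambda$ satisfying~\eqref{eq:cycle_equation} on the relevant $2$-face $F$. The single geometric input is that two consecutive edge vectors of a $2$-face are linearly independent, since $F$ is $2$-dimensional.

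For a triangular face with vertices $p_1,p_2,p_3$, set $u=p_2-p_1$ and $v=p_3-p_2$. Then $p_1-p_3=-(u+v)$, and the cycle equation becomes
\[
\lambda_{p_1p_2}u+\lambda_{p_2p_3}v-\lambda_{p_3p_1}(u+v)=0 ,
\]
that is, $(\lambda_{p_1p_2}-\lambda_{p_3p_1})u+(\lambda_{p_2p_3}-\lambda_{p_3p_1})v=0$. Linear independence of $u$ and $v$ gives $\lambda_{p_1p_2}=\lambda_{p_2p_3}=\lambda_{p_3p_1}$. Hence any two edges of the triangle are dependent.

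For a parallelogram face with cyclic vertices $p_1,p_2,p_3,p_4$, set $u=p_2-p_1=p_3-p_4$ and $v=p_3-p_2=p_4-p_1$. The cycle equation reads
\[
\lambda_{p_1p_2}u+\lambda_{p_2p_3}v-\lambda_{p_3p_4}u-\lambda_{p_4p_1}v=0 .
\]
Independence of $u$ and $v$ yields $\lambda_{p_1p_2}=\lambda_{p_3p_4}$ and $\lambda_{p_2p_3}=\lambda_{p_4p_1}$, which says exactly that opposite edges are dependent.

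There is no real obstacle here. The only point needing care is to invoke the necessity half of Theorem~\ref{thm:shephard}, which was justified above by path-independence of~\eqref{eqn:lamb_path}, rather than the full equivalence. One must also record that the equalities hold for every summand $Q$, which is the definition of dependence.
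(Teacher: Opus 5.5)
Your proof is correct and is essentially the intended argument. The paper does not prove this lemma itself; it cites \cite[Ex.~2.2.2 and 2.2.3]{PP}, and the standard proof is what you give: write out~\eqref{eq:cycle_equation} from Theorem~\ref{thm:shephard} for a triangle or a parallelogram, and use that two consecutive edge directions of a $2$-face are linearly independent. This is the same ``inspection of the cycle conditions for triangles and parallelograms'' that the paper invokes for Lemma~\ref{lem:equivclassinTP}.
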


We will also use the following corollary of \cite[Thm.~2.5.4]{PP}.
\begin{thm}\label{thm:connected}
    If any pair of vertices of $P$ is connected through a path of pairwise dependent edges, then $P$ is indecomposable.
\end{thm}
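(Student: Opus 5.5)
The plan is to show directly that every Minkowski summand $Q$ of $P$ has edge-deformation vector $\lambda(Q)$ proportional to $\lambda(P)=(1,\dots,1)$. Then $\TP$ is $1$-dimensional, which is equivalent to indecomposability, as noted after Theorem~\ref{thm:shephard}. If $P$ is a segment there is nothing to prove. So assume $\dim P \ge 2$, fix a summand $Q$, and let $p \mapsto q_p$ denote the induced surjection $V(P)\to V(Q)$.

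\emph{Step 1: every pair of vertices gives a parallel difference.} Take vertices $u,v$ and a path $u=p_0p_1\dots p_k=v$ in the graph of $P$ whose edges are pairwise dependent. By definition of dependence, all $\lambda_{p_{i-1}p_i}(Q)$ are equal to a common value $c$. Equation~\eqref{eqn:lamb_path} then telescopes to
\[
    q_v-q_u \;=\; c\sum_{i=1}^k (p_i-p_{i-1}) \;=\; c\,(v-u).
\]
So for every pair $u\neq v$ there is a scalar $c_{uv}$ with $q_v-q_u=c_{uv}(v-u)$. The scalar is well defined because $v\neq u$, and it is symmetric in $u$ and $v$.

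\emph{Step 2: the scalars are all equal.} Let $u,v,w$ be affinely independent vertices. Writing $q_w-q_u=(q_w-q_v)+(q_v-q_u)$ gives
\[
    c_{uw}(w-v)+c_{uw}(v-u) \;=\; c_{vw}(w-v)+c_{uv}(v-u).
\]
Since $w-v$ and $v-u$ are linearly independent, this forces $c_{uw}=c_{vw}=c_{uv}$. Because $\dim P\ge 2$, every line through two vertices misses some vertex. Hence for any two pairs $\{u,v\}$ and $\{u',v'\}$ we can pass from one to the other through a chain of pairs sharing a vertex, with each consecutive change justified by a non-collinear triangle. For instance, go $\{u,v\}\to\{u,v'\}\to\{u',v'\}$, inserting an auxiliary third vertex off the relevant line when needed. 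This yields a single constant $\mu$ with $q_v-q_u=\mu(v-u)$ for all vertices.

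\emph{Step 3: conclude.} Applying Step~2 to an edge $e=pp'$ gives $\lambda_e(Q)=\mu$ for every $e\in E$, so $\lambda(Q)=\mu\lambda(P)$. Equivalently, $Q=\mu P+t$ for some translation $t$. Hence $\TP$ is $1$-dimensional and $P$ is indecomposable. Alternatively, all edges lie in one class of $\equiv$, and Corollary~\ref{cor:equivalence} applies.

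The main obstacle is the bookkeeping in Step~2. Collinear triples of vertices give no information, so one must check carefully that the "shares a vertex and is non-collinear" relation connects all pairs of vertices. This is exactly where the hypothesis $\dim P\ge2$ enters, and the case of a segment must be treated separately.
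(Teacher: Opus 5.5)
Your proof is correct. It takes a different route from the paper, which gives no argument of its own and simply imports the statement as a corollary of \cite[Thm.~2.5.4]{PP}.

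You work directly with the vertex map $V(P)\to V(Q)$. Telescoping \eqref{eqn:lamb_path} along a path of pairwise dependent edges shows that $q_v-q_u$ is parallel to $v-u$ for every pair of vertices. The non-collinear-triangle argument then merges these pairwise constants into a single $\mu$, so $Q=\mu P+t$.

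Step~2 is needed only because you read the hypothesis in its weakest form, where the dependence class of the path may change with the pair $u,v$. If it is read as one class of pairwise dependent edges forming a connected spanning subgraph, Step~1 already gives a uniform $\mu$.

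The bookkeeping in Step~2 is also lighter than you expect. Fix a vertex $u$ and a second vertex $v$. Any $x\neq u$ collinear with $u,v$ is handled by one vertex $w$ off the line $uv$, which is also the line $ux$. This gives $c_{ux}=c_{uw}=c_{uv}$, so all pairs through $u$ share one constant. Applying the same fact at $x$ gives $c_{xy}=c_{xu}=c_{uv}$ for every pair $\{x,y\}$ with $x\neq u$.

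The citation gives the authors brevity and ties the statement to the general implicit-edge-dependency framework of \cite{PP}. Your argument gives a short, self-contained proof from Shephard's edge-deformation vectors alone, and it covers the weaker reading of the hypothesis.
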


And the following corollary of \cite[Thm.~2]{McMullen1987} (see also \cite[Sec.~2.6.3]{PP}).
\begin{lem}\label{lem:edge}
    If $P$ has an edge $e$ such that every facet of $P$ shares at least a vertex with $e$, then $P$ is indecomposable.
\end{lem}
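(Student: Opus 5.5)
The plan is to work with support functions rather than with the deformation polytope. Let $P=Q+R$ be a Minkowski decomposition and let $e=ab$ be the distinguished edge. I will show that $Q$ and $R$ are homothetic to $P$, or reduce to points; this is exactly indecomposability, since then $\lambda(Q)$ is a nonnegative multiple of $\lambda(P)$.

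\textbf{Setup.} Let $a\mapsto q_a$, $b\mapsto q_b$ and $a\mapsto r_a$, $b\mapsto r_b$ be the vertex maps $V(P)\to V(Q)$ and $V(P)\to V(R)$ from Section~2. Since $a$ is a vertex of $P$, we have $P^c=\{a\}$ for some $c$, and then $a=q_a+r_a$.

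Put $\mu:=\lambda_e(Q)\in[0,1]$. Then $q_b-q_a=\mu(b-a)$ and, since $P^c=Q^c+R^c$, also $r_b-r_a=(1-\mu)(b-a)$.

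Replacing $Q$ by $Q+t$ and $R$ by $R-t$ for a suitable $t$, we may assume $q_a=\mu a$. Then $r_a=(1-\mu)a$, and the displacement relations give $q_b=\mu b$ and $r_b=(1-\mu)b$.

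\textbf{Key step.} Let $F$ be a facet of $P$ with outer normal $u_F$. By hypothesis $F$ contains $a$ or $b$; say it contains $a$, so $h_P(u_F)=\langle u_F,a\rangle$.

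The vector $u_F$ lies in the closed normal cone of $P$ at $a$. For every $c$ in the interior of that cone we have $Q^c=\{q_a\}$. Since $c\mapsto h_Q(c)$ is continuous, $q_a$ attains the maximum of $\langle u_F,\cdot\rangle$ on $Q$, which gives
\[
h_Q(u_F)=\langle u_F,\mu a\rangle=\mu\,h_P(u_F).
\]
The same argument gives $h_R(u_F)=(1-\mu)h_P(u_F)$, and likewise when $F\ni b$.

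Because $P=\bigcap_F\{x:\langle u_F,x\rangle\le h_P(u_F)\}$, this yields $Q\subseteq \mu P$ and $R\subseteq (1-\mu)P$. (When $\mu=0$, the set $\mu P$ is read as $\{0\}$.)

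\textbf{Conclusion.} For every direction $c$ we get
\[
h_P(c)=h_Q(c)+h_R(c)\le \mu h_P(c)+(1-\mu)h_P(c)=h_P(c).
\]
Hence equality holds throughout, so $h_Q=\mu h_P$ and $h_R=(1-\mu)h_P$. Thus $Q=\mu P$ and $R=(1-\mu)P$ up to the translation fixed above, and $P$ is indecomposable.

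\textbf{Main obstacle.} The delicate point is the key step: justifying that the vertex $q_a$ (or $q_b$) lies on the face $Q^{u_F}$ even though $u_F$ is only a boundary direction of the normal cone of $a$. This uses the continuity argument above, or equivalently the fact that $Q^{u}$ is a summand of $P^{u}$ compatible with the vertex map.

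It is also essential to fix a single translation that works simultaneously for the facets through $a$ and those through $b$. This is exactly where the edge condition enters: the common value $\mu=\lambda_e(Q)$ makes $q_a=\mu a$ and $q_b=\mu b$ hold at once.
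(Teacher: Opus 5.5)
Your argument is correct, but it takes a different route from the paper, which gives no proof of its own. The paper quotes the lemma as a special case of \cite[Thm.~2]{McMullen1987}: a polytope with a strongly connected family of indecomposable faces that touches every facet is indecomposable, applied here to the one-element family $\{e\}$.

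In effect you prove this one-edge case directly:
\begin{enumerate}[(i)]
\item The edge scalar $\mu=\lambda_e(Q)$ and a single translation fix $q_a=\mu a$ and $q_b=\mu b$ simultaneously. In the general theorem, this consistency is supplied by indecomposability of the faces together with strong connectivity.
\item The facet-touching hypothesis then gives $h_Q(u_F)=\mu\, h_P(u_F)$ at every facet normal.
\item Comparing $h_Q+h_R$ with $h_P$ forces $Q=\mu P$.
\end{enumerate}
Your continuity argument on the closed normal cone correctly justifies the step you flagged as delicate. The citation buys generality, since it covers families of higher-dimensional indecomposable faces such as chains of triangles. Your route needs only the vertex map and edge scalars from Section~2, plus support functions.

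One detail needs tidying. Writing $P=\bigcap_F\{x:\langle u_F,x\rangle\le h_P(u_F)\}$ assumes $P$ is full-dimensional, and so does concluding $Q=\{0\}$ when $\mu=0$. You can either work inside the affine hull from the start, or observe that $Q-q_a=(Q+r_a)-a\subseteq P-a$. The latter shows $Q$ lies in $\mu a$ plus the linear span of $P-a$, which supplies the missing equations.
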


\section{Deformation polytopes of $0/1$-polytopes}\label{sec:01-TP}

Throughout this section, let $P$ be a $0/1$-polytope of dimension $d \ge 2$. Since every
$k$-dimensional face of $P$ is linearly isomorphic to a $0/1$-polytope in $\RR^k$, every
$2$-dimensional face of $P$ is either a triangle or a parallelogram. It is shown in \cite[Cor.~4.2.6]{PP}
that for general polytopes satisfying this condition on 2-faces, the deformation cone is
simplicial. We give a streamlined version of the argument to show that the deformation polytopes
are cubes. Using the restricted geometry of 0/1-polytopes, this allows us to prove Theorem 1.1.
To that end, we define an equivalence relation on the edges $E$ of $P$ as the transitive 
closure of $e \sim f$ if $e,f$ belong to a common triangle, or are parallel edges of a 
parallelogram. For $S \subseteq E$, we denote its characteristic vector by $1_S \in \{0,1\}^E$. 
The following lemma, adapted from \cite[Lem.~4.2.5]{PP} follows directly by inspecting the conditions~\eqref{eq:cycle_equation} for triangles and parallelograms.

\begin{lem}\label{lem:equivclassinTP}
    Let $P$ be a $0/1$-polytope. If  $S$ is the $\sim$-equivalence class of some edge $e$ of $P$, then $1_S \in \TP$.
\end{lem}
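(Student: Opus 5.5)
We verify the two conditions of Theorem~\ref{thm:shephard} for $\lambda = 1_S$. Condition~(i) is immediate, since every entry of $1_S$ is $0$ or $1$. Everything therefore rests on condition~(ii), the cycle equation~\eqref{eq:cycle_equation}, which has to hold on every $2$-face $F$ of $P$. As noted at the start of this section, every $2$-face of a $0/1$-polytope is either a triangle or a parallelogram, so we split into these two cases.

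\emph{Triangles.} Let $F$ be a triangle with edges $e_1,e_2,e_3$. By the definition of $\sim$, these three edges are pairwise equivalent. Since $S$ is an equivalence class, either all three lie in $S$ or none does. If none lies in $S$, the left-hand side of~\eqref{eq:cycle_equation} is the empty sum $0$. If all three lie in $S$, it becomes $(p_2-p_1)+(p_3-p_2)+(p_1-p_3)=0$. In both cases the equation holds.

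\emph{Parallelograms.} Let $F$ have cyclically ordered vertices $p_1,p_2,p_3,p_4$. Then $p_2-p_1 = p_3-p_4$ and $p_4-p_1 = p_3-p_2$. The opposite edges $p_1p_2$ and $p_3p_4$ are parallel, so they are $\sim$-equivalent and receive the same value $a\in\{0,1\}$ under $1_S$. Likewise $p_2p_3$ and $p_4p_1$ receive a common value $b$. The cycle sum is then
\[
a(p_2-p_1) + b(p_3-p_2) + a(p_4-p_3) + b(p_1-p_4) \;=\; a(p_2-p_1+p_4-p_3) + b(p_3-p_2+p_1-p_4),
\]
and both brackets vanish by the parallelogram relations, so the sum is $0$.

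Both conditions of Theorem~\ref{thm:shephard} hold, so $1_S\in\TP$. The only step needing care is that an equivalence class contains either all or none of the relevantly related edges in each $2$-face. For triangles this is exactly what the definition of $\sim$ provides. For parallelograms $S$ may contain one pair of opposite edges but not the other, and this is harmless because each pair of opposite edges cancels on its own. Note also that the triangle/parallelogram dichotomy is used essentially: for a general polygon the indicator of a class would not satisfy~\eqref{eq:cycle_equation}.
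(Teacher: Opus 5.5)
Your proof is correct and takes the same route as the paper, which says the lemma follows by inspecting Shephard's cycle condition~\eqref{eq:cycle_equation} on triangular and parallelogram $2$-faces. You check exactly these two cases and verify the box condition~(i) the same way.
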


This insight gives a precise description of the deformation polytope for $0/1$-polytopes. 
\begin{thm}\label{thm:TP_cube}
    Let $P$ be a $0/1$-polytope. Let $S_1,\dots,S_k$ be the equivalence classes of $\sim$. Then
    \[
        \TP \ = \ \sum_{i=1}^k [0,1_{S_i}] \, .
    \]
    In particular $\TP$ is linearly isomorphic to the $k$-dimensional unit cube $[0,1]^k$.
\end{thm}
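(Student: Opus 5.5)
We prove the two inclusions separately and then read off the cube structure from the fact that the classes $S_1,\dots,S_k$ partition $E$.

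For the inclusion $\supseteq$, Lemma~\ref{lem:equivclassinTP} gives $1_{S_i}\in\TP$ for every $i$. We also have $0\in\TP$, since it is the edge-deformation vector of a point summand. Theorem~\ref{thm:shephard} shows that $\TP$ is convex, so every segment $[0,1_{S_i}]$ lies in $\TP$. We still need the Minkowski sum of these segments to lie in $\TP$, and convexity alone does not give this. Instead we use the explicit description of Theorem~\ref{thm:shephard}. A point of $\sum_i[0,1_{S_i}]$ is a vector $\lambda=\sum_i t_i 1_{S_i}$ with $t_i\in[0,1]$. Since the $S_i$ are pairwise disjoint, every coordinate of $\lambda$ is some $t_i$, so condition (i) holds. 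Condition (ii) is linear in $\lambda$ and holds for each $1_{S_i}$, so it holds for $\lambda$. Therefore $\lambda\in\TP$.

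For the inclusion $\subseteq$, take any $\lambda\in\TP$. By Lemma~\ref{lem:triangleparallelogramdependent}, whenever $e\sim f$ comes from a triangle or from opposite edges of a parallelogram, $\lambda_e=\lambda_f$. Equality is transitive, so $\lambda$ is constant on each class $S_i$; call this value $t_i$. Condition (i) gives $t_i\in[0,1]$. Hence $\lambda=\sum_i t_i1_{S_i}$ lies in $\sum_i[0,1_{S_i}]$.

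It remains to identify $\TP$ with a cube. Consider the linear map $[0,1]^k\to\RR^E$ given by $t\mapsto\sum_i t_i1_{S_i}$. It is injective because the $S_i$ are nonempty and pairwise disjoint, so it is a linear isomorphism onto its image, and that image is exactly $\TP$.

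The main subtle point is the inclusion $\supseteq$: convexity of $\TP$ by itself does not put Minkowski sums of segments inside it. The argument needs the linearity of the cycle equations~\eqref{eq:cycle_equation} together with the disjointness of the classes, which is what makes the box constraints (i) hold for the sum. The other essential ingredient is that every $2$-face of a $0/1$-polytope is a triangle or a parallelogram. This is what makes $\sim$ capture all the relevant $2$-faces, and it is already built into Lemma~\ref{lem:equivclassinTP}.
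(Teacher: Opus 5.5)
Your proof is correct and follows the paper's approach. Constancy of $\lambda$ on each $\sim$-class comes from Lemma~\ref{lem:triangleparallelogramdependent}, and membership of $\sum_i t_i 1_{S_i}$ in $\TP$ comes from Lemma~\ref{lem:equivclassinTP} with the linearity of the cycle equations and the disjointness of the classes; you only read $\lambda=\sum_i t_i 1_{S_i}$ off directly instead of the paper's induction on the support, and you spell out the inclusion $\supseteq$ that the paper leaves implicit.
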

\begin{proof}
    Let $Q$ be a Minkowski summand of $P$ with edge-deformation vector $\lamb = \lamb(Q) \in [0,1]^E$. We show that there are unique $\mu_1,\dots,\mu_k \in [0,1]$ such that
    \[
        \lamb \ = \
        \mu_1 1_{S_1} +
        \mu_2 1_{S_2} +
        \cdots +
        \mu_k 1_{S_k} \, .
    \]
    This will settle the first claim. Since the equivalence classes are pairwise disjoint, choosing
    a system of representatives $e_i \in S_i$, yields the linear isomorphism $\lambda \mapsto
    (\lambda_{e_i})_{i=1,\dots,k}$ and proves the second claim.

    To prove our claim, assume that there is $e \in E$ with $\lamb_e > 0$. If $f \in E$ satisfies
    $f \sim e$, then $e \equiv f$ by Lemma~\ref{lem:triangleparallelogramdependent}. Thus $\lamb_e
    = \lamb_f$ for all $f \sim e$. If $S_i$ is the equivalence class containing $e$, this shows
    that $\lamb' = \lamb - \lamb_e 1_{S_i}$ is non-negative. Moreover, $1_{S_i}$ satisfies all the
    equations in Theorem~\ref{thm:shephard}, by Lemma~\ref{lem:equivclassinTP}, and thus so does
    $\lamb' = \lamb - \lamb_e 1_{S_i}$. We conclude that $\lamb' \in \TP$, and the proof now
    follows by induction on the size of the support of $\lamb$.
\end{proof}

The proof extends to any polytope whose $2$-dimensional faces are triangles or parallelograms. 
 For polytopes affinely isomorphic to $0/1$-polytopes this condition
is satisfied. Notice that it is not enough to require that all $2$-faces are triangles or
\emph{quadrilaterals}.

\begin{quest}
    Is there a decomposable polytope that is combinatorially isomorphic to an indecomposable $0/1$-polytope?
\end{quest}

\begin{proof}[Proof of Theorem~\ref{thm:main}]
    Theorem~\ref{thm:TP_cube} implies that every $0/1$-polytope $P \subset \RR^d$ can be written as
    \[
        P \ = \ P_1 + P_2 + \cdots + P_k \, .
    \]
    where each $P_i$ is the Minkowski summand of $P$ whose edge-deformation vector is the
    characteristic vector $1_{S_i}$ of some equivalence class of edges. Since the edges in $S_i$
    are pairwise dependent, Corollary~\ref{cor:equivalence} shows that $P_i$ is indecomposable; and
    the product structure of $\TP$ shows that this decomposition is unique.

    We may assume that $0$ is a vertex of $P$ as well as of $P_1,\dots,P_k$. It follows that
    $P_1,\dots,P_k \subseteq P \subseteq [0,1]^d$. Since for every edge $e = pp'$ of $P$, the
    vector $p-p'$ has integer coordinates, it follows from \eqref{eqn:lamb_path} that each $P_i$ is
    itself a lattice polytope contained in the unit cube, that is, a $0/1$-polytope. 
    Thus, for any $1 \le i < j \le k$ and vertices $v \in V(P_i)$ and $v' \in V(P_j)$,
    we have $v,v' \in \{0,1\}^d$.
    Moreover, we have $v+v' \in
    \{0,1\}^d$, because $v+v'$ is a lattice point contained in~$P$. This implies that $v$ and $v'$ cannot have the
    same non-zero coordinate, and thus that $P_1,\dots,P_k$ lie in pairwise orthogonal coordinate
    subspaces, which completes the proof.
\end{proof}

\section{Indecomposability results for combinatorial classes of polytopes}\label{sec:applications}
In this section we discuss applications of Theorem~\ref{thm:main} to several classes of polytopes
from combinatorics, optimization, and algebraic geometry. The common theme is that a geometric
question about Minkowski decomposability is translated into a combinatorial question about product
structure. In the examples below indecomposability is governed by familiar connectivity conditions
on the underlying discrete objects. For several families we also give alternative direct proofs,
which make the connection between the geometry of the polytope and the underlying combinatorial
data more explicit.

\subsection{Simple and simplicial $0/1$-polytopes}

A polytope $P$ is \Def{simplicial} if every $k$-dimen\-sional face has exactly $k+1$ vertices for
$k < \dim P$. It is a classical result of Shephard~\cite[Thm.~13]{Shephard} that simplicial
polytopes are indecomposable; see also \cite[Sec.~15.1]{Gruenbaum}.

A polytope $P$ is \Def{simple} if every vertex is incident to exactly $\dim (P)$ many facets or,
equivalently, edges. Simple polytopes have the property that they maximize the dimension of the
deformation cone among all polytopes of fixed dimension $n$ and number of facets $m$, namely $m-n$.
Thus, except for the $n$-dimensional simplex, all simple polytopes of dimension $n$ are
decomposable. The following result was obtained by Kaibel and Wolff~\cite{Kaibel} for which we give
a short proof.

\begin{thm}
    If $P$ is a simple $0/1$-polytope, then $P$ is a Cartesian product of simplices.
\end{thm}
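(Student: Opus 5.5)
The plan is to combine Theorem~\ref{thm:main} with the dimension count for deformation cones of simple polytopes recalled just above. By Theorem~\ref{thm:main}, write $P = P_1 \times \cdots \times P_k$, where each $P_i$ is a proper indecomposable $0/1$-polytope. By taking coordinate projections (as observed after Theorem~\ref{thm:main}), we may regard each $P_i$ as full-dimensional in its own coordinate subspace, of dimension $n_i \ge 1$. It then suffices to show two things: each $P_i$ is simple, and a simple indecomposable polytope is a simplex.

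\emph{Factors are simple.} The faces of a product are exactly the products of faces. Hence a vertex $v=(v_1,\dots,v_k)$ of $P$ has degree $\sum_i \deg_{P_i}(v_i)$ in the graph of $P$: an edge at $v$ is an edge of one factor times vertices of the others. Each factor satisfies $\deg_{P_i}(v_i) \ge n_i$. Since $P$ is simple, $\sum_i \deg_{P_i}(v_i) = \dim P = \sum_i n_i$, so every inequality must be an equality, and each $P_i$ is simple.

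\emph{Simple and indecomposable implies simplex.} Let $P_i$ be simple, full-dimensional in $\RR^{n_i}$, with $m_i$ facets. Its deformation cone (homothety classes of summands, including dilations but modulo translations) has dimension $m_i - n_i$, as recalled in the paragraph preceding the theorem. The same count applies to the cone spanned by $\TP[P_i]$, since edge-deformation vectors determine summands up to translation. Indecomposability means $\TP[P_i]$ is $1$-dimensional, so $m_i - n_i = 1$. A full-dimensional polytope in $\RR^{n_i}$ with exactly $n_i+1$ facets is a simplex. For $n_i = 1$ this just says the factor is a segment, which is consistent. Therefore $P$ is a product of simplices.

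\emph{Main obstacle.} The key step is the identification of $\dim \TP[P_i]$ with $m_i - n_i$ for simple $P_i$. If I want to avoid quoting the general deformation-cone count, I would argue via support numbers. For a simple polytope, every small perturbation of the right-hand sides $h\in\RR^{m_i}$ of the facet inequalities keeps the normal fan, so it yields a summand up to scaling. The space of such $h$ is an open subset of $\RR^{m_i}$, and quotienting by the $n_i$-dimensional space of translations gives dimension $m_i - n_i$. Indecomposability forces this to be $1$ (the dilation direction), which gives $m_i = n_i + 1$.
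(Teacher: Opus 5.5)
Your proof is correct and uses the same ingredients as the paper's: Theorem~\ref{thm:main}, the fact that a simple polytope with $m$ facets in dimension $n$ has an $(m-n)$-dimensional deformation cone (so a simple polytope that is not a simplex is decomposable), and the fact that simplicity passes to the factors of a product. The paper organizes this as an induction on dimension with a two-factor splitting. You instead apply the full decomposition into indecomposables at once, and you check that the factors are simple by your degree count, which the paper only asserts.
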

\begin{proof}
    If $P$ is simple, then either $P$ is a simplex or decomposable. If it is not a simplex, by Theorem~\ref{thm:main} there are $0/1$-polytopes
    $P_1, P_2$ with $P = P_1 \times P_2$. Now $P$ is simple if and only if $P_1$ and $P_2$ are simple and the proof follows by induction on the dimension.
\end{proof}

\subsection{$2$-level and compressed polytopes}

A lattice polytope $P \subset \RR^n$ is \Def{compressed} if for every facet $F$ and every
supporting hyperplane $H$ with $F = P \cap H$, there is at most one hyperplane $H'$ parallel to $H$
such that $H' \cap P$ contains a lattice point. Compressed polytopes were introduced in
\cite{Stanley-compressed} as those lattice polytopes all whose pulling triangulations are
unimodular. The definition given above is due to Sullivant~\cite{Sullivant}, who also proved that
every compressed polytope is affinely equivalent to a $0/1$-polytope. The notion of compressed
polytope is strongly related to that of \emph{$2$-levelness}; see~\cite{GPT,SWZ}. A polytope $P$ is
\Def{$2$-level} if for every facet-defining hyperplane $H$ there is a parallel hyperplane $H'$ such
that all vertices of $P$ lie in $H \cup H'$. Note that $P$ is not required to be a lattice
polytope.

If $P$ is compressed, then $P$ is $2$-level. We recall the argument that $2$-level polytope $P$ is
affinely isomorphic to a $0/1$-polytope, from which it follows that the vertices span an affine
lattice for which $P$ is compressed.
\begin{prop}
    If $P$ is $2$-level, then $P$ is affinely isomorphic to a $0/1$-polytope.
\end{prop}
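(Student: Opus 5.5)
The plan is the standard construction: build an affine map from a small set of facet functionals, each of which takes only two values on the vertices. Assume, after an affine change of coordinates, that $P \subset \RR^d$ is full-dimensional. For every facet $F$ of $P$, $2$-levelness gives an affine functional $\ell_F$ with $\ell_F(F) = 0$ and $\ell_F \ge 0$ on $P$. The defining property then says that $\ell_F$ takes exactly two values on $V(P)$: $0$ on vertices of $F$, and a single positive constant $c_F$ on all other vertices. There are at least two values since $d\ge 1$ and $F \ne P$. Rescale to $\hat\ell_F := \ell_F / c_F$. Then $\hat\ell_F(v) \in \{0,1\}$ for every vertex $v$.

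Let $F_1,\dots,F_m$ be all facets and consider the affine map
\[
    \phi : \RR^d \to \RR^m, \qquad x \mapsto (\hat\ell_{F_1}(x),\dots,\hat\ell_{F_m}(x)) \, .
\]
By construction, $\phi$ sends $V(P)$ into $\{0,1\}^m$. Since $P$ is full-dimensional and bounded, the facet normals linearly span $\RR^d$, so the linear part of $\phi$ is injective. Hence $\phi$ is an injective affine map and restricts to an affine isomorphism of $P$ onto $\phi(P) = \conv \phi(V(P))$. The vertices of $\phi(P)$ are the images $\phi(v)$, which lie in $\{0,1\}^m$, so $\phi(P)$ is a $0/1$-polytope, though only of dimension $d$ inside $\RR^m$.

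If one wants a full-dimensional $0/1$-polytope, select facets $F_{i_1},\dots,F_{i_d}$ whose linear parts are linearly independent and project onto those $d$ coordinates. A coordinate projection preserves $0/1$ vertices, and this particular one restricted to $\phi(\RR^d)$ is still injective, since the composite $x\mapsto(\hat\ell_{F_{i_j}}(x))_j$ has an invertible linear part. This yields an affine isomorphism of $P$ onto a $0/1$-polytope in $\RR^d$.

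The only point needing care is the claim that each $\ell_F$ is two-valued on the vertices with the off-facet value positive and constant. This is exactly the $2$-level hypothesis: all vertices lie in $H\cup H'$, and $H' \ne H$ because $P$ is not contained in a facet. Beyond that the argument is routine linear algebra, so I do not expect a real obstacle.
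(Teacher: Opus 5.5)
Your proof is correct and is essentially the paper's argument: you take $d$ facet functionals with linearly independent normals and rescale each so that it is $0/1$-valued on the vertices. The only differences are cosmetic. The paper translates a vertex to the origin and uses facets through that vertex, so its map is linear, whereas you allow arbitrary independent facets with affine functionals. Your intermediate embedding via all $m$ facets is unnecessary but harmless.
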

\begin{proof}
    We may assume that $P \subset \RR^n$ is full-dimensional and that $0$ is a vertex of $P$. Let
    $a_1,\dots,a_n$ be linearly independent normals to facets that contain $0$. We can further assume
    that the maximal value of $x \mapsto \langle a_i,x \rangle$ over $P$ is $1$ for all $i$. Define a linear transformation $T : \RR^n \to \RR^n$ by $T(x)_i = \langle a_i,x \rangle$. The $2$-level property then implies that every vertex $v$ satisfies $T(v) \in \{0,1\}^n$ and hence $T(P)$ is a $0/1$-polytope.
\end{proof}

As decomposability is a property that is invariant under affine transformations, we obtain the
following result directly from Theorem~\ref{thm:main}.

\begin{thm}
    If $P$ is a $2$-level (resp.\ compressed) polytope, then $P$ is decomposable if and only if $P$
    is a product of $2$-level (resp.\ compressed) polytopes.
\end{thm}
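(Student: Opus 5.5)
The plan is to reduce to Theorem~\ref{thm:main} through the preceding proposition, and then check that the factors inherit the $2$-level (resp.\ compressed) property. Since decomposability is invariant under affine maps, by the proposition we may replace $P$ by an affinely isomorphic full-dimensional $0/1$-polytope. For the compressed case we take Sullivant's affine equivalence instead, which maps the relevant lattice onto $\ZZ^n$.

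For the direction ``decomposable $\Rightarrow$ product'', suppose $P$ is decomposable. Then Theorem~\ref{thm:main} gives $P = P_1 \times P_2$ with proper $0/1$-polytopes $P_i$ in complementary coordinate subspaces, after a coordinate permutation. The facets of $P_1\times P_2$ are exactly $F_1\times P_2$ and $P_1\times F_2$ with $F_i$ a facet of $P_i$. The facet-defining linear functionals are therefore pullbacks of those of $P_1$ (resp.\ $P_2$) along the coordinate projection. A functional $a$ takes only two values on $V(P)=V(P_1)\times V(P_2)$ if and only if it takes only two values on $V(P_1)$. Hence $P$ is $2$-level if and only if both $P_i$ are $2$-level.

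For the compressed case we argue the same way with lattice points. The lattice points of $P$ are $(P_1\cap\ZZ^{n_1})\times(P_2\cap\ZZ^{n_2})$. The hyperplanes parallel to a facet $F_1\times P_2$ meet them exactly according to the hyperplanes parallel to $F_1$ in $\RR^{n_1}$. So compressedness passes to the factors, and conversely. Pulling back through the affine isomorphism, we obtain $P$ as a product of $2$-level (resp.\ compressed) polytopes.

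For the converse, a product $Q_1\times Q_2$ of proper polytopes equals $(Q_1\times\{0\})+(\{0\}\times Q_2)$. Neither summand is homothetic to the product, since each has smaller dimension. So it is decomposable.

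The only point needing care is that ``compressed'' is lattice-dependent. We must make sure the affine map to a $0/1$-polytope respects the lattice, which Sullivant's result provides. We also need the fact that lattice points of a product of polytopes in orthogonal coordinate subspaces are products of lattice points; this holds because the lattice is $\ZZ^{n_1}\times\ZZ^{n_2}$.
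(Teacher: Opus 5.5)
Your overall route is the paper's. Its entire proof is that decomposability is affinely invariant, so Theorem~\ref{thm:main} applies to the $0/1$-polytope supplied by the proposition. Your check that $2$-levelness passes to and from the factors of $P_1\times P_2$ is correct, and so is the converse direction; both are more explicit than the paper.

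The step that does not hold as justified is in the compressed case. You claim that ``Sullivant's affine equivalence'' carries the lattice onto $\ZZ^n$, but the paper only cites Sullivant for an \emph{affine} equivalence with a $0/1$-polytope. The natural construction, taking $n$ linearly independent primitive facet normals at a vertex, need not carry the lattice onto $\ZZ^n$.

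For example, take the $16$-cell $Q=\conv\{x\in\{0,1\}^4 : x_1+x_2+x_3+x_4 \text{ even}\}$. It is compressed with respect to its vertex lattice $D_4=\{x\in\ZZ^4 : x_1+x_2+x_3+x_4 \text{ even}\}$; transport it to $\ZZ^4$ by a linear isomorphism if you want the ambient lattice to be $\ZZ^4$. The facets $x_i\ge 0$ through the vertex $0$ have normals $e_1,\dots,e_4$, which are primitive in $D_4^*$. So the construction returns the identity, and $D_4$ lands on a sublattice of index $2$ in $\ZZ^4$. With respect to $\ZZ^4$ this $0/1$-polytope is not compressed, since no simplex on its vertices is unimodular in $\ZZ^4$. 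Your lattice-point count in $\ZZ^{n_1}\times\ZZ^{n_2}$ would therefore be carried out for the wrong lattice.

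The repair is short. Take the normals along a flag $P=F_0\supset F_1\supset\cdots\supset F_n=\{v\}$, each face a facet of the previous one, with $F_i=F_{i-1}\cap H_i$ for facet hyperplanes $H_i$ of $P$. Let $a_i\in L^*$ be the primitive normal of $H_i$, oriented so that $a_i(x-v)\ge 0$ on $P$.
\begin{itemize}
\item By Sullivant's width-one criterion, $a_i(x-v)\in\{0,1\}$ for all $x\in V(P)$.
\item $a_i$ maps $L_{i-1}=L\cap\mathrm{lin}(F_{i-1}-v)$ onto $\ZZ$; evaluate at $w-v$ for a vertex $w\in F_{i-1}\setminus H_i$.
\item The kernel of $a_i$ on $L_{i-1}$ is exactly $L_i$.
\end{itemize}
Choose $u_i\in L_{i-1}$ with $a_i(u_i)=1$. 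Then the matrix $(a_j(u_i))_{j,i}$ is triangular with ones on the diagonal. Hence $x\mapsto (a_i(x-v))_i$ maps $L$ onto $\ZZ^n$ and $V(P)$ into $\{0,1\}^n$, and your argument then goes through verbatim.

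Alternatively, you can avoid $\ZZ^n$ entirely. The vertices of a compressed polytope affinely span its lattice. The vertex lattice of $P_1\times P_2$, with $0$ a vertex of both factors, is $M_1\times M_2$, where $M_i$ is the vertex lattice of $P_i$. Each $2$-level factor $P_i$ is compressed with respect to $M_i$, as the paper remarks.
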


\subsection{Permutation polytopes}
Let $H$ be a finite group of order $n = |H|$. We can view $H$ as acting on itself by permutations
and identify $H$ with a subgroup of the symmetric group $S_n$. Every $s \in S_n$ is identified with
a permutation matrix $M_s \in \{0,1\}^{n \times n}$ and the \Def{permutation polytope} of $H$ was
defined in~\cite{PermutationPolytopes} as
\[
    P(H) \ = \ \conv(M_h : h \in H) \, .
\]
By \cite[Thm.~3.5]{PermutationPolytopes} a permutation polytope $P(H)$ is (combinatorially) a
  Cartesian product if and only if $H = H_1 \times H_2$ as finite groups. From
  Theorem~\ref{thm:main} we obtain
\begin{thm}\label{thm:permutation}
    Let $H$ be a finite group. Then $P(H)$ is decomposable if and only if $H = H_1 \times H_2$ for proper subgroups $H_1,H_2 \subset H$.
\end{thm}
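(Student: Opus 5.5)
We prove both directions by reducing to Theorem~\ref{thm:main} and the cited result \cite[Thm.~3.5]{PermutationPolytopes}. The only extra ingredient is that a Cartesian product decomposition coming from Theorem~\ref{thm:main} is a genuine product, so it is in particular a combinatorial product.

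For the direction ``$\Leftarrow$'', suppose $H = H_1 \times H_2$ with $H_1,H_2$ proper subgroups. By \cite[Thm.~3.5]{PermutationPolytopes}, $P(H)$ is then a Cartesian product. Concretely, the left-regular representation of $H_1\times H_2$ on itself is the Kronecker product of the regular representations, so $M_{(h_1,h_2)} = M_{h_1}\otimes M_{h_2}$. The map $(M_{h_1},M_{h_2})\mapsto M_{h_1}\otimes M_{h_2}$ extends to an affine isomorphism $P(H_1)\times P(H_2)\to P(H)$. Since $H_1,H_2$ are proper, $|H_i|\ge 2$, so both $P(H_1)$ and $P(H_2)$ are proper, i.e.\ not points. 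A product $Q\times R=(Q\times\{r_0\})+(\{q_0\}\times R)$ with $Q,R$ proper has summands of lower dimension than $P$, so neither summand is homothetic to $P$. Hence $P(H)$ is decomposable. I would rely on the cited theorem for the statement that it is (affinely) a product, and supply the Kronecker-product remark only to make sure the product is geometric and not merely combinatorial.

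For the direction ``$\Rightarrow$'', suppose $P(H)$ is decomposable. It is a $0/1$-polytope, so Theorem~\ref{thm:main} writes $P(H)$ as a Cartesian product of at least two proper indecomposable $0/1$-polytopes lying in orthogonal coordinate subspaces. In particular $P(H)$ is combinatorially a Cartesian product of two proper polytopes. The cited \cite[Thm.~3.5]{PermutationPolytopes} then gives $H\cong H_1\times H_2$. The factors must be nontrivial, since a trivial factor would yield a point factor of the polytope, and that is excluded by properness.

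The main obstacle I expect is the matching of notions. One has to check that ``combinatorially a Cartesian product'' in \cite{PermutationPolytopes} means a product of two polytopes each with at least two vertices, equivalently proper factors. One also has to check that the group factorization produced there consists of proper subgroups. If the cited theorem is phrased for faithful representations in general, I would confirm that it applies to the regular representation used here. Otherwise everything follows directly from Theorem~\ref{thm:main}.
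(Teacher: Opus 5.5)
Your reduction is the same as the paper's (Theorem~\ref{thm:main} combined with \cite[Thm.~3.5]{PermutationPolytopes}), and ``$\Rightarrow$'' is fine. The genuine problem is the Kronecker-product step in ``$\Leftarrow$''. It is false, and its failure shows that ``$\Leftarrow$'' cannot hold when $P(H)$ is defined through the action of $H$ on itself, which is the definition both you and the paper use.

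First, no affine map sends the vertex $(M_{h_1},M_{h_2})$ of $P(H_1)\times P(H_2)$ to $M_{h_1}\otimes M_{h_2}$. An affine map preserves the relation $(A,B)+(A',B')=(A,B')+(A',B)$ among vertices of a product. However, $A\otimes B+A'\otimes B'-A\otimes B'-A'\otimes B=(A-A')\otimes(B-B')\neq 0$ for $A\neq A'$, $B\neq B'$. Worse, in the regular action $(M_h)_{y,x}=1$ if and only if $y=hx$. So each position $(y,x)$ lies in the support of exactly one matrix, namely $M_{yx^{-1}}$. Hence the $M_h$ have pairwise disjoint supports and are linearly (so affinely) independent. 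Therefore $P(H)$ is an $(|H|-1)$-simplex, indecomposable for every $H$. For $H=C_2\times C_2$ you get a tetrahedron, not the square $P(C_2)\times P(C_2)$, although $H$ is a product of two proper subgroups.

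What is missing is the correct setting, which the paper's one-line derivation also leaves implicit (note that $\mathrm{Birk}_n$ comes from $S_n$ acting on $[n]$, not on itself). $H\le S_n$ has to be a permutation group with a fixed action, and the splitting has to be compatible with that action. For example, take $[n]=\Omega_1\sqcup\Omega_2$ with $H_i$ acting on $\Omega_i$ and trivially on the other block. Then $M_{(h_1,h_2)}=M_{h_1}\oplus M_{h_2}$ is block diagonal, and the injective linear map $(A,B)\mapsto A\oplus B$ carries $P(H_1)\times P(H_2)$ onto $P(H)$. This direct-sum remark, not the tensor one, is what gives a geometric product. An abstract splitting $H\cong H_1\times H_2$ does not suffice even for general actions ($C_2\times C_2$ acting regularly again). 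So for ``$\Leftarrow$'' you must use the exact hypothesis of \cite[Thm.~3.5]{PermutationPolytopes}, not a paraphrase in terms of abstract groups.

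Finally, your worry about combinatorial versus geometric products is unnecessary for $0/1$-polytopes. Every quadrilateral $2$-face is a parallelogram, so the relation $\sim$ of Theorem~\ref{thm:TP_cube} depends only on the face lattice. In a combinatorial product of two proper polytopes, $\sim$ never relates edges of the two factor types. So such a $0/1$-polytope has at least two $\sim$-classes and is decomposable.
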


\newcommand\Birk{\mathrm{Birk}}%
For $H = S_n$, the permutation polytope $\Birk_n = P(S_n)$ is the famous \Def{Birkhoff polytope} of doubly stochastic matrices. It follows directly from Theorem~\ref{thm:permutation}
\begin{cor}
    For any $n \ge 1$, the Birkhoff polytope $\Birk_n$ is indecomposable.
\end{cor}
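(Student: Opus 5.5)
By Theorem~\ref{thm:permutation}, it suffices to show that the symmetric group $S_n$ is never an internal direct product $H_1 \times H_2$ of two proper subgroups. The polytope $\Birk_n$ is $P(S_n)$, and $P(S_n)$ is a $0/1$-polytope, so the theorem applies directly. The cases $n \le 2$ are trivial. For $n=1$ the polytope is a point. For $n=2$ the group $S_2 \cong \ZZ/2$ has prime order, so it has no decomposition into two nontrivial factors, and $\Birk_2$ is a segment, which is indecomposable.

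For $n \ge 3$, suppose $S_n = H_1 \times H_2$ with $H_1, H_2$ nontrivial. I would use the fact that each factor of a direct product is a normal subgroup. The normal subgroups of $S_n$ are classical.
\begin{itemize}
    \item For $n \ge 5$ and $n=3$ they are $1$, $A_n$ and $S_n$.
    \item For $n=4$ there is additionally the Klein four-group $V_4$.
\end{itemize}
The two factors must intersect trivially and have product of orders equal to $n!$.

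The case check then runs as follows.
\begin{itemize}
    \item If $n \neq 4$, both factors would have to equal $A_n$, since these are the only nontrivial proper normal subgroups. But $A_n \cap A_n = A_n$ is nontrivial, a contradiction.
    \item If $n = 4$, the only pair of nontrivial proper normal subgroups with trivial intersection would need orders multiplying to $24$. The available orders are $4$ and $12$, and the possible products are $4\cdot4=16$, $4\cdot 12 = 48$ and $12\cdot 12=144$, none of which is $24$. Moreover, $V_4 \subset A_4$, so these subgroups do not even intersect trivially.
\end{itemize}

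An alternative, more uniform argument uses the center. For $n\ge3$ the center of $S_n$ is trivial. In a direct product $H_1\times H_2$, each factor is normal and centralizes the other. Hence any nontrivial abelian normal factor would lie in the center, so both factors must be nonabelian normal subgroups with trivial intersection, and again only $A_n$ is available.

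The main obstacle is simply invoking the classification of normal subgroups of $S_n$. This is standard: it follows from the simplicity of $A_n$ for $n\ge5$, together with direct checks for $n=3,4$. Everything polyhedral has already been handled by Theorem~\ref{thm:permutation}.
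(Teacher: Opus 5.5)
Your proposal is correct and follows the paper's route exactly. The paper deduces the corollary in one line from Theorem~\ref{thm:permutation}, and you supply the group-theoretic step it leaves implicit: via the normal subgroups of $S_n$, you show that $S_n$ is never an internal direct product of two proper subgroups, and your case analysis for $n=3$, $n=4$ and $n\ge 5$ is correct.
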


\subsection{Antiblocking polytopes}
A non-empty polytope $P$ is \Def{antiblocking} if $P \subset \RR_{\ge0}^n$ and for every $y \in P$
and $x \in \RR^n$ with $0 \le x_i \le y_i$ for all $i$, we have $x \in P$. Antiblocking polytopes
were introduced by Fulkerson~\cite{Fulkerson} as an embodiment of hereditary set systems or
simplicial complexes. Let $P \subset \RR^n$ be an antiblocking $0/1$-polytope ($0/1$-antiblocking
for short). Define $\Delta \ := \ \{ A \subseteq [n] : 1_A \in P \}$, where $1_A=\sum_{i\in A} e_i$
is the characteristic vector of $A$. Then $\Delta$ satisfies that $B \in \Delta$ for all $B$ such
that $B \subseteq A$ for some $A \in \Delta$. Conversely if $\Delta \subseteq 2^{[n]}$ is a
non-empty hereditary set family, that is, a \Def{simplicial complex}, then the polytope
\[
    P_\Delta \ := \ \conv( 1_A : A \in \Delta )
\]
is $0/1$-antiblocking.

A \Def{nonface} is a set $N \in 2^{[n]} \setminus \Delta$. An inclusion-minimal nonface, i.e., a
nonface $N$ such that $N\setminus v \in \Delta$ for all $v \in N$, is simply called a minimal
nonface.%
\begin{lem}\label{lem:nonface}
    Let $\Delta$ be a simplicial complex. Let $N$ be a  minimal nonface and $i,j \in N$ distinct
    elements. Then $\conv( 1_{N\setminus \{i,j\}}, 1_{N\setminus \{i\}}, 1_{N\setminus \{j\}})$ is a
    triangular face of $P_\Delta$.
\end{lem}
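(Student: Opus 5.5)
We will exhibit a linear functional $c \in \RR^n$ whose maximizers on $P_\Delta$ are exactly the three vertices $1_{N\setminus\{i,j\}}$, $1_{N\setminus\{i\}}$, $1_{N\setminus\{j\}}$. Since these are three affinely independent $0/1$-points, their convex hull is then a triangle and a face of $P_\Delta$. First note that all three sets lie in $\Delta$: $N\setminus\{i\}$ and $N\setminus\{j\}$ do because $N$ is a minimal nonface, and $N\setminus\{i,j\}$ does because $\Delta$ is hereditary. They are affinely independent: writing $u = 1_{N\setminus\{i,j\}}$, the differences $e_j$ and $e_i$ are linearly independent.

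For the functional we take
\[
    c \ = \ 1_N - (|N|-1)\, 1_{[n]\setminus N} \, ,
\]
that is, weight $+1$ on coordinates in $N$ and a large negative weight $-M$ outside $N$ (any $M>|N|$ would do, and $M = |N|$ suffices). Since $P_\Delta$ is the convex hull of the points $1_A$ with $A \in \Delta$, it suffices to show that among all $A\in\Delta$ the value $\inner{c,1_A} = |A\cap N| - M|A\setminus N|$ is maximized exactly when $A$ is one of the three sets above.

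If $A \not\subseteq N$, the value is at most $|N| - M < 0$, so $A$ is not a maximizer. If $A \subseteq N$, then $A \ne N$ because $N \notin \Delta$, so the value is $|A| \le |N|-1$. This bound is attained by $N\setminus\{i\}$ and $N\setminus\{j\}$, but also by every $N\setminus\{k\}$ with $k\in N$. This is the main obstacle: the naive functional $1_N$ picks out the whole facet spanned by all $N\setminus\{k\}$, not only the three sets we want, and it never selects $N\setminus\{i,j\}$ at all.

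To fix both issues we put extra weight on coordinates $i$ and $j$. We use
\[
    c \ = \ \sum_{k\in N\setminus\{i,j\}} 2e_k \; + \; e_i + e_j \; - \; M\,1_{[n]\setminus N}
\]
with $M$ large. For $A\subsetneq N$, removing a coordinate $k\in N\setminus\{i,j\}$ costs $2$, while removing $i$ or $j$ costs $1$. Let $T = 2(|N|-2)+2$ be the value of $N$ itself. The set $N\setminus\{i\}$ has value $T-1$, as does $N\setminus\{j\}$, while $N\setminus\{i,j\}$ has value $T-2$. Thus these values are unequal, and we must rebalance.

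The correct choice is to weight coordinates $i$ and $j$ by $0$ and all other coordinates of $N$ by $1$. Then a set $A\subsetneq N$ with $A \in \Delta$ attains the maximal value $|N|-2$ if and only if $N\setminus\{i,j\} \subseteq A$. The sets $A$ with $N\setminus\{i,j\}\subseteq A\subseteq N$ are exactly the three target sets together with $N$, and $N$ is excluded since it is a nonface. Every other $A\subseteq N$ misses some $k\in N\setminus\{i,j\}$ and scores less. Finally, $M=1$ already suffices to penalize any $A \not\subseteq N$, since such an $A$ scores at most $|N|-2-1$. Hence the face $P_\Delta^c$ is exactly the stated triangle.
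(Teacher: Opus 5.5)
Your final argument is correct and is essentially the paper's proof: the paper takes any $c$ with $c_a<0=c_i=c_j<c_b$ for $a\notin N$ and $b\in N\setminus\{i,j\}$, and your last functional (weight $1$ on $N\setminus\{i,j\}$, $0$ on $i,j$, $-1$ off $N$) is an instance of it, with the verification the paper leaves implicit written out. In a final write-up, delete the first two candidate functionals: neither works, and the statements about the size of $M$ in your opening paragraph contradict each other.
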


\begin{proof}
    Consider the vertices of $P_\Delta$ that maximize the linear function $c
    \in \RR^n$ with $c_a < 0 = c_i = c_j < c_b$ for all $a \in [n]\setminus N$
    and $b \in N \setminus \{i,j\}$.
\end{proof}

\newcommand{\Gex}{G_{\mathrm{ex}}}%
Note that if $P_1,P_2$ are antiblocking, then so is $P_1 \times P_2$. If $P_i = P_{\Delta_i}$,
then $P_1 \times P_2 = P_\Delta$ for the \Def{join} $\Delta = \Delta_1 \ast \Delta_2 = \{A_1 \uplus
A_2 : A_i \in \Delta_i \}$. For a simplicial complex define the \Def{exclusion graph}
$\Gex(\Delta)$ on $[n]$ with an edge $ij$ if there is a minimal nonface $N$ with $i,j \in N$.

\begin{lem}\label{lem:Gex}
    Let $\Delta$ be a simplicial complex. Then $P_\Delta$ is a product if and only if $\Gex(\Delta)$ is disconnected.
\end{lem}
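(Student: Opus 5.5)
We prove the two directions separately. Throughout, "product" means a product in complementary coordinate subspaces. This is what Theorem~\ref{thm:main} delivers for $0/1$-polytopes, and a product of antiblocking polytopes is again a join of complexes.

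\emph{If $\Gex(\Delta)$ is disconnected, then $P_\Delta$ is a product.} Let $[n] = V_1 \uplus V_2$ with no edge of $\Gex(\Delta)$ between $V_1$ and $V_2$, and put $\Delta_i := \{A \in \Delta : A \subseteq V_i\}$. We claim $\Delta = \Delta_1 \ast \Delta_2$. The inclusion $\subseteq$ is clear, since $\Delta$ is hereditary. For $\supseteq$, take $A_1 \in \Delta_1$ and $A_2 \in \Delta_2$, and suppose $A_1 \uplus A_2 \notin \Delta$. Then $A_1 \uplus A_2$ contains a minimal nonface $N$. Since $A_1, A_2 \in \Delta$, the set $N$ meets both $V_1$ and $V_2$. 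Any $i \in N \cap V_1$ and $j \in N \cap V_2$ then give an edge $ij$ of $\Gex(\Delta)$, a contradiction. Hence $P_\Delta = P_{\Delta_1} \times P_{\Delta_2}$. Both factors are proper: we may assume every singleton lies in $\Delta$, since otherwise that coordinate is identically $0$ and can be discarded (or treated as a trivial factor), so $e_i \in P_{\Delta_1}$ for $i \in V_1$.

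\emph{If $P_\Delta$ is a product, then $\Gex(\Delta)$ is disconnected.} Suppose $P_\Delta = P_1 \times P_2$ with $P_1, P_2$ proper and lying in complementary coordinate subspaces $\RR^{V_1}, \RR^{V_2}$, as in Theorem~\ref{thm:main}. Then $P_1$ and $P_2$ are the coordinate projections of $P_\Delta$, and they are again $0/1$-antiblocking. The vertex set is $V(P_\Delta) = V(P_1) \times V(P_2)$, so $\Delta = \Delta_1 \ast \Delta_2$. Now let $N$ be a minimal nonface. If $N$ met both $V_1$ and $V_2$, then $N \cap V_1$ and $N \cap V_2$ would be proper subsets of $N$, hence faces of $\Delta_1$ and $\Delta_2$ respectively. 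Their union $N$ would then lie in the join $\Delta$, a contradiction. So every minimal nonface lies inside $V_1$ or inside $V_2$, and $\Gex(\Delta)$ has no edge between $V_1$ and $V_2$.

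The main obstacle is making sure both $V_1$ and $V_2$ are nonempty. This holds once we assume every coordinate is actually used, i.e.\ $\{i\} \in \Delta$ for all $i$. Without that assumption, degenerate coordinates (with $\{i\} \notin \Delta$, so $\{i\}$ is itself a minimal nonface and $i$ is isolated in $\Gex(\Delta)$) have to be handled by convention. I would state this normalization explicitly: restrict to the support of $\Delta$, which does not change $P_\Delta$ up to coordinate projection. A secondary point is the identification of the factors in Theorem~\ref{thm:main} as coordinate projections. This follows from its proof, where the summands containing $0$ lie in orthogonal coordinate subspaces spanning the ambient space.
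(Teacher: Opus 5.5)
Your proof is correct and follows essentially the same route as the paper: a coordinate product $P_\Delta = P_{\Delta_1}\times P_{\Delta_2}$ corresponds to a join $\Delta = \Delta_1 \ast \Delta_2$, and a minimal nonface cannot meet two join factors. Your version is slightly more careful than the paper's on two points: you exclude straddling edges by noting that $N\cap V_1$ and $N\cap V_2$ are proper subsets of the minimal nonface $N$, rather than relying only on $\{u,v\}\in\Delta$, and you state explicitly the normalization, implicit in the paper, that every singleton lies in $\Delta$.
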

\begin{proof}
    If $P_\Delta$ is a product then $\Delta = \Delta_1 \ast \cdots \ast
    \Delta_k$ for $k \ge 2$ subcomplexes $\Delta_i \subseteq 2^{V_i}$. Since for
    any $u \in V_i, v \in V_j$ for $i\neq j$, we have $\{u,v\} \in \Delta$, the
    graph $\Gex$ has connected components $V_1,\dots,V_k$. Conversely, if
    $V_1,\dots,V_k$ are the connected components of $\Gex$, then every nonface $N$ is a subset of some $V_i$. Hence if $\Delta_i \subset \Delta$ is the subcomplex induced on $V_i$, then $\sigma_1 \cup \cdots \cup \sigma_k \in \Delta$ for all $\sigma_i \in \Delta_i$. Thus $\Delta = \Delta_1 \ast \cdots \ast \Delta_k$.
\end{proof}

Lemma~\ref{lem:Gex} together with Theorem~\ref{thm:main} already implies the following result. We give a
short proof that highlights the interaction of $P_\Delta$ and $\Delta$.

\begin{thm}\label{thm:antiblock}
    Let $P = P_\Delta$ be a $0/1$-antiblocking polytope. Then $P$ is indecomposable if and only if $\Gex(\Delta)$ is connected.
\end{thm}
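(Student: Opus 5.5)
One direction is immediate. If $\Gex(\Delta)$ is disconnected, then Lemma~\ref{lem:Gex} shows that $P_\Delta$ is a product of two proper antiblocking polytopes. A product $P_1\times P_2 = (P_1\times\{0\}) + (\{0\}\times P_2)$ with both factors proper is decomposable, since neither summand is homothetic to $P$. For the converse, rather than quoting Theorem~\ref{thm:main}, I would give a direct argument via Theorem~\ref{thm:connected} that uses the structure of $\Delta$. The plan is to show that all edges of $P_\Delta$ are dependent, i.e.\ that they lie in a single $\equiv$-class; Corollary~\ref{cor:equivalence} then gives indecomposability.

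The first step concerns the \emph{coordinate edges}. For $A\in\Delta$ and $i\notin A$ with $A\cup i\in\Delta$, the segment $[1_A,1_{A\cup i}]$ is an edge: take $c$ with $c_i=0$, $c_a=1$ on $A$ and $c_b=-1$ elsewhere. I would show the following.
\begin{enumerate}[(a)]
\item For fixed $i$, all edges in direction $e_i$ are dependent. Any two such edges $[1_A,1_{A\cup i}]$ and $[1_{A\cup j},1_{A\cup ij}]$ with $A\cup ij\in\Delta$ form a square $2$-face; it is cut out by $c$ equal to $1$ on $A$, $0$ on $i,j$ and $-1$ elsewhere. Opposite edges of a parallelogram are dependent by Lemma~\ref{lem:triangleparallelogramdependent}. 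Descending from $A$ to $\emptyset$ one element at a time connects every $e_i$-edge to the edge $[0,e_i]$.
\item If $ij$ is an edge of $\Gex$, coming from a minimal nonface $N\ni i,j$, then Lemma~\ref{lem:nonface} provides a triangle containing the edges $[1_{N\setminus ij},1_{N\setminus j}]$ (direction $e_i$) and $[1_{N\setminus ij},1_{N\setminus i}]$ (direction $e_j$). These two edges are therefore dependent, and by (a) every $e_i$-edge is dependent on every $e_j$-edge.
\end{enumerate}
If $\Gex(\Delta)$ is connected, all coordinate edges thus lie in one class.

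To finish, I would use Theorem~\ref{thm:connected}. Any vertex $1_A$ is joined to $0$ by the path $\emptyset\subset\{a_1\}\subset\cdots\subset A$, whose steps are coordinate edges, which are pairwise dependent by the previous step. Hence any two vertices are connected by a path of pairwise dependent edges, and $P_\Delta$ is indecomposable. (Alternatively, one can note that every edge of $P_\Delta$ lies in some $2$-face together with a coordinate edge, but Theorem~\ref{thm:connected} avoids this.)

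The step I expect to be delicate is (a), namely checking that the squares really are $2$-faces and that the chains from $A$ down to $\emptyset$ stay inside $\Delta$; the latter holds because $\Delta$ is hereditary. There are also degenerate cases. If $\{i\}\notin\Delta$, then $x_i\equiv 0$ on $P_\Delta$, so $\{i\}$ is a minimal nonface, $i$ is an isolated vertex of $\Gex$, and $P_\Delta$ lives in a coordinate subspace; these coordinates should be discarded first, reducing to the full-dimensional case. If $\Gex$ is a single vertex (so $n=1$), then $P_\Delta$ is a segment, which is indecomposable.
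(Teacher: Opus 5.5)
Your proposal is correct and follows the paper's proof essentially step by step. Square $2$-faces make every $e_i$-edge dependent on $[0,e_i]$, the triangle of Lemma~\ref{lem:nonface} merges the $e_i$- and $e_j$-classes along each edge of $\Gex(\Delta)$, and Theorem~\ref{thm:connected} finishes with coordinate paths to $0$; you also supply the supporting functionals that the paper leaves to ``the antiblocking property''.

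Your remark on singletons $\{i\}\notin\Delta$ should be read as a tacit hypothesis rather than a reduction, because discarding such coordinates changes $\Gex(\Delta)$. For example, $\Delta=\{\emptyset,\{1\}\}\subseteq 2^{[2]}$ yields an indecomposable segment with disconnected $\Gex(\Delta)$. The paper's proof of the easy direction relies on the same tacit assumption.
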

\begin{proof}
    If $\Gex(\Delta)$ is disconnected, then $\Delta$ is a join and $P_\Delta$ is a product, thus decomposable. Assume now that $\Gex(\Delta)$ is connected.

    If $A \in \Delta$ and $i\in A$, then $1_A1_{A\setminus i}$ is an edge of $P_\Delta$. We first
    prove that all the edges parallel to $1_A-1_{A\setminus i}$ are dependent to
    $1_{\{i\}}1_{\emptyset}$. If $|A|>1$, then there is some $j\in A\setminus i$ and
    $\conv(1_A,1_{A\setminus i},1_{A\setminus \{i,j\}},1_{A\setminus j})$ is a parallelogram 2-face
    of $P_\Delta$ by the antiblocking property. By Lemma~\ref{lem:triangleparallelogramdependent},
    $1_A1_{A\setminus i}$ and $1_{A\setminus j}1_{A\setminus \{i,j\}}$ are dependent. We conclude
    by induction on~$|A|$.

    Now, let $ij$ be an edge of $\Gex$, and let $N$ be a minimal nonface with $i,j\in N$. Then by
    Lemma~\ref{lem:nonface} $F = \conv(1_{N\setminus \{i,j\}}, 1_{N\setminus i}, 1_{N\setminus j})$
    is a triangular face of $P_\Delta$. Again by Lemma~\ref{lem:triangleparallelogramdependent},
    all three edges of $F$ are dependent. In particular they are dependent to the edges
    $1_{\{i\}}1_{\emptyset}$ and $1_{\{j\}}1_{\emptyset}$. By the connectivity of $\Gex$, any two
    edges that are parallel to the coordinate axis are pairwise dependent.

    Note that every vertex $1_A$ with $A=\{i_1,\dots,i_k\}$ is connected to $0=1_\emptyset$ by a
    path of such edges: $1_A,1_{A\setminus i_1},\dots,1_{\{i_k\}},1_{\emptyset}$. By
    Theorem~\ref{thm:connected}, $P_\Delta$ is indecomposable.
\end{proof}

\newcommand{\oG}{\overline{G}}%
\newcommand{\oE}{\overline{E}}%
\subsection{Clique, stable sets, matching and edge polytopes}
Let $G = (V,E)$ be a simple undirected graph. A set $S \subseteq V$ is \Def{stable} if there is no
edge $ab \in E$ with $a,b \in S$. Conversely, if $ab \in E$ for every distinct $a,b \in S$, the set
$S$ is called a \Def{clique} of $G$. If we denote by $\oG = (V,\oE)$ the \Def{complement graph} of
$G$ with $\oE = \{ ab : a,b \in V, a \neq b, ab \not\in E \}$, then cliques of $G$ are stable sets
of $\oG$ and conversely. The \Def{stable set polytope} of $G$ is
\[
    \Stab(G) \ := \ \conv\{ 1_S : S \subseteq V \text{ stable}\} \, .
\]
The \Def{clique polytope} can then be defined as $\Cli(G) = \Stab(\oG)$. Stable set and clique
  polytopes are among the basic objects of polyhedral combinatorics~\cite{Schrijver}. They encode
  fundamental optimization problems while their linear descriptions and facial structure reflect
  subtle graph-theoretic properties.

Note that both $\Stab(G)$ and $\Cli(G)$ are $0/1$-antiblocking with underlying simplicial complexes
being the collections of stable sets and cliques, respectively. Since in this case minimal nonfaces
are edges (nonedges, respectively), we infer that exclusion graphs are $G$ ($\oG$, respectively).

\begin{thm}\label{thm:stab}
    Let $G$ be a simple graph. Then $\Stab(G)$ (resp. $\Cli(G)$) is indecomposable if and only if $G$ (resp. $\oG$) is connected.
\end{thm}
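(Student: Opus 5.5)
The plan is to reduce Theorem~\ref{thm:stab} directly to Theorem~\ref{thm:antiblock}. The first step is to identify $\Stab(G)$ as $P_\Delta$, where $\Delta$ is the complex of stable sets of $G$. Subsets of stable sets are stable, and $\emptyset$ is stable, so $\Delta$ is a nonempty simplicial complex on $V$. Moreover, $\Stab(G)=\conv(1_S : S\in\Delta)=P_\Delta$ is $0/1$-antiblocking.

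The second step is to compute the minimal nonfaces of $\Delta$.
\begin{itemize}
\item Every singleton $\{v\}$ is stable, since $G$ is simple and has no loops. Hence every nonface has at least two elements.
\item A set $N$ is a nonface exactly when it contains both endpoints of some edge $ab\in E$. In that case $\{a,b\}\subseteq N$ is already a nonface.
\item By minimality, every minimal nonface is of the form $\{a,b\}$ with $ab\in E$. Conversely, each such $\{a,b\}$ is a minimal nonface, because its proper subsets are stable.
\end{itemize}
Consequently two vertices $i,j$ lie in a common minimal nonface if and only if $ij\in E$. This gives $\Gex(\Delta)=G$.

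The third step is to apply Theorem~\ref{thm:antiblock}: $\Stab(G)$ is indecomposable if and only if $\Gex(\Delta)=G$ is connected.

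For the clique polytope, I will use the definition $\Cli(G)=\Stab(\oG)$ and apply the previous statement to the simple graph $\oG$. This gives that $\Cli(G)$ is indecomposable if and only if $\oG$ is connected.

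There is no real obstacle; the only point needing care is the degenerate case where $G$ has a single vertex. Then $\Stab(G)=[0,1]$ is a segment, hence indecomposable, and $G$ is connected, so the statement holds. The same check applies to $\oG$. If $G$ has no edges and at least two vertices, $\Gex$ is edgeless and hence disconnected, and $\Stab(G)$ is a cube, which is decomposable. This case is consistent with the theorem and is covered by Theorem~\ref{thm:antiblock} (via Lemma~\ref{lem:Gex}).
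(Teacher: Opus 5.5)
Your proposal is correct and follows the paper's argument. Like the paper, you view $\Stab(G)$ as the $0/1$-antiblocking polytope of the stable-set complex, note that its minimal nonfaces are exactly the edges of $G$ so that $\Gex=G$, apply Theorem~\ref{thm:antiblock}, and treat $\Cli(G)=\Stab(\oG)$ the same way.
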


A \Def{matching} of a graph $G = (V,E)$ is a collection of edges $M \subseteq E$ such that any two
distinct $e,f \in M$ are not incident to a common vertex. We may assume that $G$ does not have
isolated vertices and at least one edge. The collection of matchings of $G$ is a simplicial complex
and the associated $0/1$-antiblocking is the \Def{matching polytope} $\Mat(G)$ of $G$. The
\Def{line graph} $L(G)$ of $G$ is the graph with $E$ as vertices whose edges $ef$ are precisely
those pairs of edges of $G$ that share a common endpoint. Hence matchings in $G$ are stable sets in
$L(G)$. With our assumptions, we note that $L(G)$ is connected if and only if $G$ is.
\begin{cor}
    The matching polytope $\Mat(G)$ is indecomposable if and only if $G$ is connected.
\end{cor}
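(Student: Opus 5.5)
The plan is to deduce the statement from Theorem~\ref{thm:stab}, applied to the line graph $L(G)$. The matching polytope is the stable set polytope of the line graph: $\Mat(G) = \Stab(L(G))$, where we identify $\RR^E$ with the coordinate space indexed by the vertex set of $L(G)$. Indeed, a set $M \subseteq E$ is a matching precisely when no two of its elements share an endpoint, which means precisely that $M$ is a stable set of $L(G)$. Hence the simplicial complexes defining the two $0/1$-antiblocking polytopes coincide, and so do the polytopes themselves. Theorem~\ref{thm:stab} then says that $\Mat(G)$ is indecomposable if and only if $L(G)$ is connected.

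It remains to check, under the standing assumptions, that $L(G)$ is connected if and only if $G$ is. Here $G$ has no isolated vertices and at least one edge.

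Suppose $G$ is connected and take edges $e=ab$ and $f=cd$. Choose a path in $G$ from $b$ to $c$, say $b=v_0,v_1,\dots,v_m=c$. Then the sequence $e, v_0v_1, \dots, v_{m-1}v_m, f$ consists of edges in which consecutive ones share an endpoint, so it is a walk in $L(G)$, after deleting repetitions (for instance when $m=0$).

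Conversely, suppose $L(G)$ is connected and take vertices $u,w$ of $G$. Since there are no isolated vertices, we may pick edges $e \ni u$ and $f \ni w$. A path $e=e_0,e_1,\dots,e_r=f$ in $L(G)$ gives a walk in $G$ from $u$ to $w$ through the shared endpoints of consecutive edges. So $G$ is connected.

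The only point requiring care is the role of the hypotheses. Without the assumption of no isolated vertices, the equivalence fails: isolated vertices would make $G$ disconnected while not affecting $L(G)$ or $\Mat(G)$. The requirement of at least one edge ensures that $\Mat(G)$ is proper and nonempty-dimensional, so indecomposability is meaningful. The degenerate case of a single edge also has to be handled. Then $L(G)$ is a single vertex, $\Mat(G)$ is a segment, and a segment is indecomposable, consistent with Theorem~\ref{thm:stab} for a one-vertex graph. I expect no real obstacle beyond noting these conventions.
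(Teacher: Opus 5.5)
Your proposal is correct and follows the paper's argument: you identify $\Mat(G)$ with $\Stab(L(G))$, apply Theorem~\ref{thm:stab}, and use that $L(G)$ is connected if and only if $G$ is, given no isolated vertices and at least one edge. The only difference is that you write out the line-graph connectivity check, which the paper states without proof.
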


\subsection{Pure simplicial complexes: matroid and edge polytopes}
\newcommand{\Fac}{\mathcal{A}}%
A simplicial complex $\Delta \subset 2^{[n]}$ is called \Def{pure} if every inclusion-maximal $A
\in \Delta$ has the same cardinality $d$. The collection of inclusion-maximal elements
$\Fac = \Fac(\Delta)$ are the vertices of the face
\[
    P_\Fac \ = \ \{ x \in P_\Delta : x_1 + \cdots + x_n = d \} \ = \ \conv(1_A : A \in \Fac) \, .
\]
of $P_\Delta$, which completely determines $P_\Delta$.

It is straightforward to see that $P_\Fac$ is a product if and only if $P_\Delta$ is a product and
Theorem~\ref{thm:main} yields the following.

\begin{prop}
    For a pure simplicial complex, $P_\Fac$ is indecomposable if and only if $P_\Delta$ is indecomposable.
\end{prop}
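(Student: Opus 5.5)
By Theorem~\ref{thm:main}, a $0/1$-polytope is indecomposable if and only if it is not a Cartesian product of two proper $0/1$-polytopes. The statement therefore reduces to the assertion made just before it: $P_\Fac$ is a product if and only if $P_\Delta$ is a product. We may assume $\Delta$ is not a single point, i.e.\ $d\ge 1$, and we discard coordinates $i$ with $\{i\}\notin\Delta$. Then $P_\Delta$ is full-dimensional, since it contains $0$ and all $e_i$. The plan is to translate both product conditions into a splitting of $[n]$ and compare them.

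First suppose $P_\Delta=P_1\times P_2$ with respect to a nontrivial coordinate splitting $[n]=V_1\uplus V_2$, as in the proof of Theorem~\ref{thm:main}. Then $\Delta=\Delta_1\ast\Delta_2$ with $\Delta_i\subseteq 2^{V_i}$. Each $\Delta_i$ contains a nonempty set, because every singleton lies in $\Delta$. A maximal face of a join is a union of maximal faces of the two factors, and purity of $\Delta$ forces each $\Delta_i$ to be pure, say of dimension $d_i$. Hence $\Fac(\Delta)=\Fac(\Delta_1)\ast\Fac(\Delta_2)$, and therefore $P_\Fac=P_{\Fac_1}\times P_{\Fac_2}$. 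It remains to see that both factors are proper. If, for example, $P_{\Fac_1}$ were a point, then $\Delta_1$ would be a simplex $2^{V_1}$. In that case $P_{\Delta_1}$ is the cube $[0,1]^{V_1}$, which is either a segment (so $P_\Delta$ is still a product, and we can split further) or decomposable. To avoid this issue, I would state the claim up to removing \emph{coning} coordinates, namely $i$ lying in every facet. Removing such an $i$ changes $P_\Fac$ only by a translation, and changes $P_\Delta$ only by a product with a segment. I expect this bookkeeping to be the main point needing care. The statement is literally false in degenerate cases: for $\Delta$ a simplex, $P_\Fac$ is a point while $P_\Delta$ is a cube. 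So the proof should restrict to the nondegenerate case, where no coordinate lies in all facets and $d<n$, or else interpret ``indecomposable'' as applying only to proper polytopes.

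Conversely, suppose $P_\Fac=Q_1\times Q_2$. By Theorem~\ref{thm:main}, after translation this is realized on disjoint coordinate sets. Concretely, there is a splitting $[n]=V_1\uplus V_2$ (with the coning coordinates distributed arbitrarily) such that $A\in\Fac$ if and only if $A\cap V_1\in\Fac_1$ and $A\cap V_2\in\Fac_2$, where $\Fac_i$ ranges over the projections. This uses the fact that the vertex set of a product is the product of the vertex sets, and that the coordinate sum is constant on each factor. The faces of $\Delta$ are exactly the subsets of members of $\Fac$, so $\Delta=\langle\Fac_1\rangle\ast\langle\Fac_2\rangle$, and thus $P_\Delta$ is the product of the corresponding antiblocking polytopes. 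Both factors are proper because each $V_i$ is nonempty and every singleton is a face.

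Combining the two directions with Theorem~\ref{thm:main} applied to both polytopes gives the proposition in the nondegenerate case. An alternative route would instead use Lemma~\ref{lem:Gex} and compare $\Gex(\Delta)$ with the corresponding product criterion for $P_\Fac$. However, the join argument above is more direct.
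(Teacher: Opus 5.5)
Your argument is the paper's own. Theorem~\ref{thm:main} reduces the proposition to the claim that $P_\Fac$ is a product if and only if $P_\Delta$ is, and your join bookkeeping is exactly what the paper dismisses as straightforward.

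Your caveat is correct and genuinely needed, and it fails beyond the simplex case. Take the pure complex with facets $\{1,2\},\{1,3\}$:
\begin{itemize}
\item $P_\Fac=\conv(e_1+e_2,e_1+e_3)$ is a segment, hence indecomposable;
\item $P_\Delta=[0,e_1]+\conv(0,e_2,e_3)$ is a triangular prism, hence decomposable.
\end{itemize}
So the paper's claimed product equivalence, and with it the proposition as stated, is false whenever $\Delta$ has a cone point.

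Here $P_\Fac$ is proper, so your fallback of reading ``indecomposable'' as applying only to proper polytopes does not rescue the statement. The no-cone-point hypothesis is the right fix. It is needed only for the implication ``$P_\Delta$ decomposable $\Rightarrow$ $P_\Fac$ decomposable''; your converse argument works unconditionally. Under that hypothesis, a join factor $\Delta_i=2^{V_i}$ with $V_i\neq\emptyset$ is impossible, because its vertices would be cone points of $\Delta$. You should state this step explicitly rather than leave it implicit.
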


\newcommand{\Ind}{\mathcal{I}}%
\newcommand{\Bases}{\mathcal{B}}%
Matroids are combinatorial abstractions of linear independence that play a
central role in optimization, tropical geometry, and combinatorial Hodge
theory~\cite{AHK}. Their base polytopes translate matroid structure into convex geometry.
Here Theorem~\ref{thm:main} shows that this translation preserves irreducibility in that
Minkowski indecomposability of the base polytope is equivalent to connectedness
of the matroid.

A matroid $M$ can be described as a pure simplicial complex $\Ind(M) \subseteq 2^E$ satisfying the
augmentation property. This is the \Def{independence complex} of $M$ and $P_M = P_{\Ind(M)}$ is the
\Def{independence polytope}. The \Def{bases} $\Bases(M)$ of $M$ are the inclusion-maximal
independent sets and $B_M = P_{\Bases}$ is the \Def{matroid base polytope} of $M$. The matroid $M$
is \Def{connected} if and only if $\Ind(M)$ is not a join of simplicial complexes.

\begin{cor}[\cite{Ngu86,LPP}]
    Let $M$ be a matroid. Then $B_M$ is indecomposable if and only if $P_M$ is indecomposable if and only if $M$ is connected.
\end{cor}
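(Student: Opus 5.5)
The first equivalence is already available: the preceding Proposition says that for a pure simplicial complex $\Delta$, $P_\Fac$ is indecomposable if and only if $P_\Delta$ is. Since $\Ind(M)$ is pure with $\Fac(\Ind(M)) = \Bases(M)$, we have $B_M = P_{\Bases}$ and $P_M = P_{\Ind(M)}$. So $B_M$ is indecomposable exactly when $P_M$ is. It remains to show that $P_M$ is indecomposable if and only if $M$ is connected.

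For the second equivalence I would use Theorem~\ref{thm:antiblock}. The polytope $P_M = P_{\Ind(M)}$ is $0/1$-antiblocking, so it is indecomposable if and only if $\Gex(\Ind(M))$ is connected. By Lemma~\ref{lem:Gex}, this happens if and only if $P_{\Ind(M)}$ is not a product. The proof of that lemma shows that being a product is the same as $\Ind(M)$ being a join of at least two complexes on the components of $\Gex$. By the definition of connectedness adopted in the paper, $M$ is connected precisely when $\Ind(M)$ is not a join of simplicial complexes. Chaining these equivalences gives the claim.

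The one point needing care is that "join" must mean a nontrivial join, with both factors on nonempty vertex sets. This matches Lemma~\ref{lem:Gex}, where the components $V_i$ are nonempty and $k \ge 2$.

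A second subtlety concerns loops. A loop $i$ is a minimal nonface $\{i\}$, so it is an isolated vertex of $\Gex$. The coordinate $x_i$ is then identically $0$ on $P_M$, and the product splitting is only a splitting off of a point. I would adopt the convention of the paper that we may restrict to the coordinates spanned by $P$, or equivalently assume $M$ is loopless. Under that convention the isolated vertex is discarded and the equivalences go through unchanged.

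Coloops deserve a check too. A coloop lies in no circuit, so it is isolated in $\Gex$, and $M$ splits as a direct sum. In this case $P_M$ really is a product with a segment, and hence decomposable. This is consistent with the claim, since a matroid with a coloop (and more than one element) is not connected.

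With these conventions, the argument is essentially an assembly of Theorem~\ref{thm:antiblock}, Lemma~\ref{lem:Gex}, and the preceding Proposition. It is also worth noting that minimal nonfaces of $\Ind(M)$ are exactly the circuits, so $\Gex$ is connected precisely when every two elements lie in a common circuit. This agrees with the classical definition of matroid connectivity.
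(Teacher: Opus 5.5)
Your assembly is the same as the paper's. You use the Proposition on pure simplicial complexes to compare $B_M$ with $P_M$, Theorem~\ref{thm:antiblock} and Lemma~\ref{lem:Gex} (in effect Theorem~\ref{thm:main}) to turn indecomposability of $P_M$ into ``$\mathcal{I}(M)$ is not a nontrivial join'', and the join definition of connectivity; your loop convention is also needed. The problem is your coloop paragraph, which only checks $P_M$. A coloop $c$ is a cone point of $\mathcal{I}(M)$. It lies in every basis, so $x_c\equiv 1$ on $B_M$ and $B_M=B_{M\setminus c}+e_c$ is just a translate: the coloop contributes a point factor, not a segment.

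Concretely, take the loopless matroid $M=U_{1,2}\oplus U_{1,1}$ on $\{1,2,3\}$, with $1,2$ parallel and $3$ a coloop. Then $B_M=\conv(e_1+e_3,e_2+e_3)$ is a segment, hence indecomposable. But $P_M=\conv(0,e_1,e_2)+\conv(0,e_3)$ is a triangular prism, hence decomposable, and $M$ is disconnected. So the first equivalence you import fails here. The claim behind the Proposition, that $P_\Delta$ being a product forces $P_{\mathcal{A}}$ to be one, breaks as soon as a join factor of $\Delta$ is a full simplex. Your sentence ``the equivalences go through unchanged'' is therefore false under the loopless convention alone.

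The fix is to assume, dually to your loop convention, that $M$ also has no coloops. For $B_M$ this amounts to discarding the coordinates that are constant on $B_M$. Then in a nontrivial join $\mathcal{I}(M)=\mathcal{I}(M|V_1)\ast\mathcal{I}(M|V_2)$, each $M|V_i$ is loopless and coloopless on a nonempty set, hence has at least two bases. So $B_M=B_{M|V_1}\times B_{M|V_2}$ with both factors proper, and your chain of equivalences holds. The paper's derivation rests on the same Proposition and tacitly needs this hypothesis too. A connected matroid with at least two elements has neither loops nor coloops, so only two implications are affected: indecomposability implying connectedness, and indecomposability of $B_M$ implying that of $P_M$.
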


Let $G = ([n],E)$ be a simple connected graph with $E \neq \emptyset$. Ohsugi and Hibi~\cite{OH}
introduced \Def{edge polytopes}
\[
    \EP_G \ := \ \conv( e_i + e_j : ij \in E)
\]
in the context of toric geometry and commutative algebra. Edge polytopes have been well studied
  in relation to fundamental algebraic-geometric properties such as normality of the associated
  edge ring (Ehrhart ring), equivalently the existence of unimodular coverings/triangulations;
  see~\cite{BinomialIdeals}. The edge polytope of $G$ is the face $P_{\Fac(\Delta)}$ of the
  simplicial complex $\Delta = \{\emptyset\} \cup V \cup E$.

\begin{cor}
    The edge polytope $\EP_G$ of a simple graph $G$ is indecomposable if and only if $G$ is connected.
\end{cor}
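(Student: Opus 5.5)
The plan is to reduce everything to the antiblocking machinery of the previous subsections, exactly as the sentence before the statement suggests. Set $\Delta = \{\emptyset\} \cup V \cup E$, viewing vertices as singletons and edges as $2$-element sets. Since $G$ has at least one edge, $\Delta$ is a pure simplicial complex of dimension $1$ (isolated vertices aside, which are discarded as in the matching case), with $\Fac(\Delta) = E$ and $P_{\Fac} = \EP_G$.

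By the Proposition on pure simplicial complexes, $\EP_G$ is indecomposable if and only if $P_\Delta$ is. By Theorem~\ref{thm:antiblock} (equivalently, Lemma~\ref{lem:Gex} together with Theorem~\ref{thm:main}), $P_\Delta$ is indecomposable if and only if $\Gex(\Delta)$ is connected. The statement therefore reduces to the purely combinatorial claim that $\Gex(\Delta)$ is connected if and only if $G$ is connected.

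\textbf{Step 1: the minimal nonfaces of $\Delta$.} These are the non-edges $\{i,j\}\notin E$ and the triangles $\{i,j,k\}$ of $G$; no larger set is minimal, because every $3$-subset is already a nonface. Consequently $\Gex(\Delta)$ is the union of two graphs on $V$: the complement graph $\oG$, and the graph of all edges of $G$ that lie in a triangle of $G$.

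\textbf{Step 2: $G$ disconnected implies $\Gex(\Delta)$ disconnected.} When $G$ is disconnected, I would argue directly at the level of $\EP_G$. The edge sets of the components span pairwise orthogonal coordinate subspaces, but a vertex $e_i+e_j$ uses only one component. Hence $\EP_G$ is not a product of the component polytopes; it is their free sum, i.e.\ the convex hull of their union. Here one must be careful about what happens when several components carry edges. This direction will be checked against the join criterion of Lemma~\ref{lem:Gex}: $\Delta$ is a join $\Delta_1 \ast \Delta_2$ exactly when every vertex of $V_1$ is adjacent in $G$ to every vertex of $V_2$ and no triangle meets both parts.

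\textbf{Step 3: $G$ connected implies $\Gex(\Delta)$ connected.} The plan is to show that no partition $V = V_1 \uplus V_2$ is a union of components of $\Gex$. Such a partition would force $G \supseteq K_{V_1,V_2}$, because $\oG$ has no edges across it. It would also force every triangle of $G$ to lie inside $V_1$ or inside $V_2$. The second condition makes each $V_i$ stable whenever the other part is nonempty: an edge inside $V_1$ together with any vertex of $V_2$ forms a triangle meeting both parts. So $G$ would be exactly $K_{V_1,V_2}$.

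\textbf{Main obstacle.} The crux is ruling out this complete bipartite configuration. When $|V_1| = 1$ or $|V_2| = 1$, $G$ is a star and $\EP_G$ is a simplex, so there is no issue. When $|V_1|,|V_2|\ge 2$, the argument above does not exclude the configuration, and $\Gex(\Delta)$ genuinely falls apart; for $G = C_4$, for instance, $\EP_G$ is a square. I would first try to pass from $P_\Delta$ back to $P_{\Fac}$ at the level of the product structure, since the Proposition's equivalence is where the join structure of $\Delta$ must match a product structure of $\EP_G$. Resolving whether this case is compatible with the statement as worded is the step I expect to be the main obstacle in completing the proof.
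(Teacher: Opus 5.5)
The obstacle you isolate cannot be overcome: the statement is false as worded, in both directions, so no proof can be completed. Your own example already disposes of the ``if'' direction. $C_4$ is connected, but $\EP_{C_4}=\conv(e_1+e_2,e_2+e_3,e_3+e_4,e_4+e_1)=[e_1,e_3]+[e_2,e_4]$ is a square, hence decomposable. More generally, $\EP_{K_{m,n}}$ is the product of an $(m-1)$-simplex and an $(n-1)$-simplex. For the ``only if'' direction, let $G$ consist of two disjoint edges. Then $\EP_G=[e_1+e_2,e_3+e_4]$ is a segment, which is indecomposable. (More generally, if at least two components of $G$ carry edges, $\EP_G$ is the join of their edge polytopes. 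A join is indecomposable, because all its edges are linked through triangular $2$-faces.) This is also why your Step~2 cannot be carried out. We have $\oG\subseteq\Gex(\Delta)$, and the complement of a disconnected graph is connected, so $\Gex(\Delta)$ is connected whenever $G$ is disconnected.

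Independently of this, your reduction is unsound, because the Proposition ``$P_\Fac$ indecomposable iff $P_\Delta$ indecomposable'' fails for cones. Only the implication ``$P_\Fac$ is a product $\Rightarrow$ $P_\Delta$ is a product'' holds. Take the star $K_{1,n}$ with $n\ge 2$. Its center is isolated in $\Gex(\Delta)$, so $P_\Delta$ is a prism over a simplex and is decomposable, whereas $\EP_{K_{1,n}}$ is a simplex. So the star case you dismissed as ``no issue'' already contradicts your chain of equivalences. The paper's one-line derivation rests on the same Proposition, so it does not rescue the statement.

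What can actually be proved comes from applying Theorem~\ref{thm:main} directly to $\EP_G$, after deleting isolated vertices. Suppose $\EP_G=P_1\times P_2$ with proper $0/1$-factors on complementary coordinate sets. All pairs of vertices $(v_1,v_2)$ occur, and every vertex of $\EP_G$ has coordinate sum $2$. Hence the coordinate sum is constant on the vertices of each factor. It cannot be $0$ on a proper factor, so both sums equal $1$. This gives $V(P_1)=\{e_a:a\in A\}$ and $V(P_2)=\{e_b:b\in B\}$ with $|A|,|B|\ge 2$, which forces $E=A\times B$. Hence $\EP_G$ is decomposable if and only if $G\cong K_{m,n}$ with $m,n\ge 2$. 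Your Step~3 computation is correct: a partition not crossed by $\Gex(\Delta)$ forces $G=K_{V_1,V_2}$. It agrees with this characterization once the cone case $\min(m,n)=1$ is separated out.
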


\newcommand\Gco{\mathrm{Comp}}%
\subsection{Order and chain polytopes} \label{sec:order}%
Let $\poset = ([n],\preceq)$ be a finite partially ordered set. The \Def{order polytope},
introduced by Geissinger~\cite{Geissinger} and studied in depth by Stanley~\cite{S86}, is the
polytope of order preserving maps from $\poset$ to $[0,1]$
\[
    \Oc(\poset) \ = \ \{ x \in [0,1]^n : x_a \le x_b \text{ for all } a \prec b \} \, .
\]
Similar to antiblocking polytopes of simplicial complexes, order polytopes geometrically encode
  posets. The vertices of $\Oc(\poset)$ are precisely the indicator functions $1_F \in \{0,1\}^n$
  of filters of $\poset$, that is, sets $F \subseteq [n]$ such that for all $a \in F$ and $b
  \succeq a$ we have $b \in F$. The \Def{comparability graph} $\Gco(\poset)$ is the undirected
  simple graph on $[n]$ with edges $ab$ if $a \prec b$ or $b \prec a$. It is straightforward to
  verify that $\Oc(\poset)$ is a product if and only if $\Gco(\poset)$ is disconnected. The factors
  of $\Oc(\poset)$ are order polytopes of the subposets induced on the connected components of
  $\Gco(\poset)$.

\begin{thm}\label{thm:order_polytope}
    Let $\poset$ be a finite poset. Then $\Oc(\poset)$ is indecomposable if and only if $\Gco(\poset)$ is connected.
\end{thm}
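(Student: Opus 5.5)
If $\Gco(\poset)$ is disconnected, then $\Oc(\poset)$ is a product of the order polytopes of the subposets on the connected components. Each factor is proper, since it contains the filters $\emptyset$ and the whole component. Hence $\Oc(\poset)$ is decomposable. For the converse there are two routes. The quick one is to invoke Theorem~\ref{thm:main}: a decomposable $0/1$-polytope is a product of proper $0/1$-polytopes lying in orthogonal coordinate subspaces. One would then check that such a splitting of coordinates $[n]=V_1\uplus V_2$ forces no comparabilities between $V_1$ and $V_2$. Indeed, if $a\prec b$ with $a\in V_1$ and $b\in V_2$, the vertex $1_{\uparrow a}$ has support meeting both sides. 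Its projections $1_{\uparrow a}\cap V_1$ and $\emptyset$ would then be combinable, giving the vertex $1_{(\uparrow a)\cap V_1}$. That set is not a filter, since it contains $a$ but not $b$. This contradicts the product structure. I would, however, give a direct combinatorial proof in the spirit of Theorem~\ref{thm:antiblock}, using Theorem~\ref{thm:connected} and Lemma~\ref{lem:triangleparallelogramdependent}.

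The direct plan is as follows. First I would recall the edge structure: two filters $F\subset G$ span an edge of $\Oc(\poset)$ when $G\setminus F$ is a connected subposet. In particular $1_F 1_{F\cup\{a\}}$ is an edge whenever $a$ is a minimal element of the complement of $F$ that can be added, i.e.\ $F\cup\{a\}$ is a filter. Call such edges \emph{elementary of direction $a$}. Every vertex $1_F$ is joined to $0=1_\emptyset$ by a path of elementary edges, obtained by removing minimal elements of $F$ one at a time. So by Theorem~\ref{thm:connected} it suffices to show that all elementary edges are pairwise dependent.

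Step one is to show that all elementary edges of the same direction $a$ are dependent. Suppose $F$ and $F\cup\{a\}$ are filters, and $c$ is an element that can be added to or removed from both $F$ and $F\cup\{a\}$ independently, with $c$ incomparable to $a$. Then the four filters $F, F\cup a, F\cup c, F\cup\{a,c\}$ form a square $2$-face, a parallelogram, cut out by the cube facets $x_a,x_c$ free and all other coordinates fixed. Lemma~\ref{lem:triangleparallelogramdependent} makes the opposite edges dependent. Moving between any two filters admitting $a$ by such steps needs an argument. The set of filters $F$ with $F\cup a$ also a filter is an interval-like sublattice: all contain $\{b: b\succ a\}$ and none contain any $b\preceq a$. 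Within this sublattice one can walk by single additions of elements incomparable to $a$. Each such step is a parallelogram move, so the dependence transfers.

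Step two is to link different directions along comparabilities. For a cover relation $a\prec b$, take $F=\{c: c\succ b\}\cup\{c : c\succ a, c\ne b\}$, suitably chosen so that $F$, $F\cup b$ and $F\cup\{a,b\}$ are filters but $F\cup a$ is not. I would then claim that $\conv(1_F,1_{F\cup b},1_{F\cup\{a,b\}})$ is a triangular face, cut out by $x_a\le x_b$ tight together with fixing the remaining coordinates. This makes the elementary edge of direction $b$ dependent on the one of direction $a$. Connectivity of $\Gco(\poset)$, equivalently of the Hasse diagram, then makes all directions dependent.

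I expect the main obstacle to be the careful choice of $F$ in Step two and the verification that the triangle is indeed a face. The connectivity argument inside Step one's sublattice is the other point needing care. For both I would use a linear functional argument like the proof of Lemma~\ref{lem:nonface}.
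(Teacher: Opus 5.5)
Your proposal is correct. The quick route is exactly the paper's remark that the theorem follows from Theorem~\ref{thm:main} once one checks that $\Oc(\poset)$ can only be a product when $\Gco(\poset)$ is disconnected, and your principal-filter argument supplies that check.

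Your direct proof differs from both proofs in the paper. It transplants the template of Theorem~\ref{thm:antiblock}:
\begin{itemize}
\item elementary edges $1_F1_{F\cup\{a\}}$;
\item parallelograms to identify all elementary edges of one direction, by walking through the interval of filters between $\{c : c\succ a\}$ and $\poset\setminus\{c : c\preceq a\}$;
\item for a cover $a\prec b$, the triangle on $F=\{c : c\succ a\}\setminus\{b\}$, $F\cup\{b\}$, $F\cup\{a,b\}$ to link directions;
\item Theorem~\ref{thm:connected}.
\end{itemize}
Every face you need has the form $\Oc(\poset)\cap K$ for a $2$-face $K$ of $[0,1]^n$. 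Such a set is automatically a face, because $\Oc(\poset)\subseteq[0,1]^n$ and it meets $K$. So the verification you expected to be the main obstacle is immediate, and no custom functional is needed.

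The paper proceeds differently. Its first proof notes that every facet contains $1_\emptyset$ or $1_\poset$ and applies McMullen's criterion, Lemma~\ref{lem:edge}, to the edge $1_\emptyset1_\poset$. Its second proof shows by an extremal argument that every edge $1_F1_{F'}$ is dependent on $1_\emptyset1_\poset$. It uses the triangles of Lemma~\ref{lem:ord_triangles}, built from nested filters with connected differences, together with Corollary~\ref{cor:equivalence}.

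What each approach buys:
\begin{itemize}
\item Your route uses only cube faces. It avoids both McMullen's theorem and the description of edges and triangles via connected differences.
\item However, Step one relies on opposite edges of \emph{parallelograms} being dependent, which is a metric property of this particular realization.
\item The paper's two proofs use only triangles and vertex--facet incidences. They therefore apply verbatim to every polytope combinatorially equivalent to $\Oc(\poset)$, which is the feature the paper emphasizes.
\end{itemize}

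Two small slips. First, $F\cup\{a\}$ is a filter exactly when $a$ is a \emph{maximal} (not minimal) element of $\poset\setminus F$. Second, the triangle in Step two is cut out by fixing all coordinates other than $x_a,x_b$. Making $x_a\le x_b$ tight in addition yields only its edge $1_F1_{F\cup\{a,b\}}$.
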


The result again follows directly from Theorem~\ref{thm:main} but we give two proofs that, similar
to $0/1$-antiblocking polytopes, ties geometry to combinatorics. They have the additional property
that they only rely on the combinatorial type of the polytope and not the geometric realization,
and thus are valid for any combinatorially equivalent polytope.

For two filters $F,G$ the vertices $1_F,1_G$ span an edge if and only if, say, $F \subseteq G$ and
$G \setminus F$ has a connected comparability graph.

The first proof extends the proof of indecomposability of \Def{shard polytopes}
from~\cite[Prop.~64]{PPR}. \emph{Shard polytopes} are a special subfamily of order polytopes that
are also the matroid polytopes for certain series-parallel graphic matroids, and also isomorphic to
the matroid polytopes of certain positroids, namely those arising as lattice path matroids of
snakes. Shard polytopes were introduced in \cite{PPR} as building blocks for constructing
quotientopes via Minkowski sums.

\begin{proof}[{First proof of Theorem~\ref{thm:order_polytope}}]
    If $\Gco(\poset)$ is not connected, then $\Oc(\poset)$ is a product and thus decomposable. If $\Gco(\poset)$ is connected, then $1_\poset,1_\emptyset$ span an edge of $\Oc(\poset)$ as discussed above.
    Moreover, every facet of $\Oc(\poset)$ contains either $1_\emptyset=(0,\ldots,0)$ or $1_\poset=(1,\ldots,1)$. Indeed, there are three types of inequalities defining the facets of $\Oc(\poset)$:
    \begin{itemize}
        \item[(i)] If $p_i \in \poset$ is minimal, then $x_i = 0$ is a supporting hyperplane;
        \item[(ii)] If $p_i \in \poset$ is maximal, then $x_i =1$ is a supporting hyperplane;
        \item[(iii)] For $p_i,p_j \in \poset$, if $p_i$ is covered by $p_j$, then $x_i \leq x_j$ is a supporting hyperplane.
    \end{itemize}
    The facets of type (i) contain~$1_\emptyset$, those of type (ii) contain~$1_\poset$ and those of type (iii) contain both.
    We conclude by Lemma~\ref{lem:edge}.
\end{proof}

The second proof will uses the sufficiency of Lemma~2.2 from~\cite{Mori}. We give a short proof for
completeness.
\begin{lem}\label{lem:ord_triangles}
    Let $\poset$ be a connected poset and filters $F_1 \subset F_2 \subset F_3$
    such that $F_j \setminus F_i$ is connected for all $i<j$. Then $\conv(
    1_{F_1},
    1_{F_2},
    1_{F_3})$ is a triangular face of $\Oc(\poset)$.
\end{lem}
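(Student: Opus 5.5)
Our approach is to exhibit a linear functional $c \in \RR^n$ whose maximizers on $\Oc(\poset)$ are exactly the three vertices $1_{F_1},1_{F_2},1_{F_3}$. Since these three points are affinely independent ($F_1 \subsetneq F_2 \subsetneq F_3$), the resulting face is then a triangle. Write $A = F_2\setminus F_1$ and $B = F_3 \setminus F_2$, both nonempty and connected, with $A\cup B = F_3\setminus F_1$ connected by hypothesis. We choose
\[
    c_a = \begin{cases} M & a \in F_1,\\ \alpha_a & a \in A,\\ \beta_a & a\in B,\\ -M & a \notin F_3, \end{cases}
\]
with $M$ large. Then any maximizing filter $G$ must satisfy $F_1 \subseteq G \subseteq F_3$. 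So we are reduced to filters $G$ with $G\setminus F_1 =: U$ an up-closed subset of the subposet $A\cup B$. For such $U$ the value is $\mathrm{const} + c(U)$.

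The next step is to choose the weights on $A \cup B$ so that $c(U)$ is maximized exactly at $U \in \{\emptyset, B, A\cup B\}$. The first constraint we impose is $c(A)=c(B)=0$. Inside $B$, which is connected and where every subset up-closed in $A\cup B$ is up-closed in $B$, we want the only up-sets of value $\ge 0$ to be $\emptyset$ and $B$. The natural device is to give maximal elements of $B$ positive weight and the remaining elements negative weight. More robustly, we use a spanning tree of the comparability (Hasse) graph of $B$, available since $B$ is connected: for each cover relation $a\prec b$ in the tree we add $+1$ to $b$ and $-1$ to $a$. The resulting flow-type weight sums to $0$ on $B$. For any up-set $U\subseteq B$, each tree edge contributes $-1$ if $U$ contains the top but not the bottom, and $0$ otherwise, since up-closedness forbids the reverse situation. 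Hence $c(U)\le 0$, with equality only if no tree edge is cut, which by connectivity forces $U\in\{\emptyset,B\}$.

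We apply the same construction on $A$, and then add one tree edge joining $A$ to $B$ inside $A\cup B$, with a tiny weight $\epsilon$ ($+\epsilon$ on its top, $-\epsilon$ on its bottom). Since $F_2$ is a filter, $A$ is never above $B$, so such a connecting cover relation goes from an element of $A$ up to an element of $B$; it exists by connectivity of $A\cup B$. For a general up-set $U$ of $A\cup B$, we write $U = U_A \cup U_B$. The value is then $-(\#\text{cut tree edges})$, and zero forces $U_A\in\{\emptyset,A\}$, $U_B\in\{\emptyset,B\}$, and excludes the cut $U=A$ (which cuts the connecting edge). Note that $U=A$ is not even up-closed when $A$ lies below something in $B$.

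Finally, we take $M$ large relative to all these weights. We must check that elements outside $F_3$ and inside $F_1$ cannot be traded off. This is standard because $M$ dominates: any filter $G$ with $F_1 \not\subseteq G$ or $G\not\subseteq F_3$ loses at least $M$ minus a bounded amount. The main subtle point is the cut-counting argument, namely that up-closedness makes every cut tree edge contribute negatively. This uses that tree edges are comparabilities oriented upward, and that $U$ up-closed in $\Oc$ restricts correctly to $A\cup B$ (for $a\in F_1$, up-closure is automatic since $F_1\subseteq G$).
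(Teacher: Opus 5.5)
You have the same overall plan as the paper: a linear functional built from comparabilities, plus weights $\pm M$ that force $F_1\subseteq G\subseteq F_3$. But the part of the functional on $A\cup B$ is set up incorrectly, and as written it does not cut out the triangle.

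The central error is an $A$/$B$ mix-up. Since $F_2$ is a filter containing $A$ and disjoint from $B$, no element of $B$ lies above an element of $A$. So every comparability between the two parts has the form $b\prec a$ with $b\in B$, $a\in A$, which is the reverse of what you claim. Consequences:
\begin{itemize}
\item $U=A$ is always up-closed in $A\cup B$, and it is precisely the up-set giving the middle vertex $1_{F_2}=1_{F_1\cup A}$.
\item $U=B$, which gives $F_1\cup B\neq F_2$, is never up-closed, because connectivity of $A\cup B$ supplies some $b\prec a$.
\item The correct target is therefore $U\in\{\emptyset,A,A\cup B\}$, not $\{\emptyset,B,A\cup B\}$.
\item Your connecting edge, once correctly oriented as $b\prec a$, is cut exactly by $U=A$, so its penalty deletes $1_{F_2}$ from the face.
\end{itemize}
A concrete failure: take the two-element chain $b\prec a$ with $F_1=\emptyset$, $F_2=\{a\}$, $F_3=\{a,b\}$. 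After fixing signs, your functional is $\epsilon x_b-\epsilon x_a$. It is maximized only at $0$ and $1_{\{a,b\}}$, which is an edge, not a triangle.

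Separately, your signs are reversed throughout. With $+1$ on the top $b$ and $-1$ on the bottom $a$ of a tree edge $a\prec b$, a cut edge contributes $+1$, not $-1$. So $c(U)\ge 0$ and maximizing rewards cuts. Likewise, giving positive weight to the maximal elements of a two-element chain makes the singleton top the unique maximizer.

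To repair the argument, put $+1$ on the bottom and $-1$ on the top of each tree edge (that is, use $x_a-x_b$ for $a\prec b$), and drop the connecting edge altogether. Then:
\begin{itemize}
\item For any filter $G$, the set $U=G\cap(A\cup B)$ is automatically up-closed in $A\cup B$.
\item Hence $c(U)\le 0$, with equality iff $U\cap A\in\{\emptyset,A\}$ and $U\cap B\in\{\emptyset,B\}$.
\item The cross comparability $b\prec a$ rules out $U=B$ for free, leaving exactly $U=\emptyset,A,A\cup B$, i.e.\ $G=F_1,F_2,F_3$.
\item Any $M>0$ suffices.
\end{itemize}
This is the paper's functional $\sum_{ab\in S}\ell_{ab}-\sum_{a\notin F_3}x_a$, with $S$ read as the cover relations lying inside $F_2\setminus F_1$ or inside $F_3\setminus F_2$ only. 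Covers between the two parts must be excluded, for the same reason your connecting edge fails. Your weight $M$ on $F_1$ supplies the constraint $G\supseteq F_1$, which the paper's displayed $\ell$ does not enforce on its own (it vanishes at $1_\emptyset$).
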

\newcommand\cov{\prec\!\,\!\!\!{\mathbin{\raisebox{0.35ex}{\scalebox{0.5}{$\bullet$}}}}\ }%
\begin{proof}
    For a cover relation $a \cov b$, consider the linear function $\ell_{ab}(x)
    = x_a - x_b$. Then for every filter $F$ we have $\ell(1_F) = 0$ if $a \in F$ or $b \not\in F$ and $< 0$ otherwise. Let $S$ be the collection of
    cover relations $a \cov b$ such that $a,b \in F_j \setminus F_i$ for some $i < j$. The linear function
    \[
        \ell(x) \ = \ \sum_{ab \in S} \ell_{ab}(x) - \sum_{a \not\in F_3} x_a
    \]
    satisfies $\ell(1_G) \le 0$ on all filters of $\poset$ and, as the differences are connected,
    $\ell(1_G)=0$ precisely when $G = F_i$ for $i=1,2,3$.
\end{proof}

\begin{proof}[Second proof of Theorem~\ref{thm:order_polytope}]
    We have to show that if $\Gco(\Pi)$ is connected, then $\Oc(\Pi)$ is indecomposable. By
    assumption $1_\emptyset 1_{\poset}$ is an edge of $\Oc(\poset)$. We show that every edge $1_F
    1_{F'}$ is dependent to $1_\emptyset 1_{\poset}$. The result then follows from
    Corollary~\ref{cor:equivalence}.

    Arguing by contradiction, let $1_F 1_{F'}$ be an edge that is not dependent to $1_\emptyset
    1_\Pi$ with $|F' \setminus F|$ maximal. Since $F$ is a proper subset of $F'$, $N := F'
    \setminus F$ is non-empty, and $N \neq \Pi$ because $1_F 1_{F'}\neq 1_\emptyset 1_\Pi$ . As
    $\Gco(\Pi)$ is connected, there is a path $s_0s_1 \dots s_k$ starting in some $s_0 \in \Pi
    \setminus N$ and ending in $s_k \in N$. Let $i \ge 0$ be minimal with $s_{i+1} \in N$. There
    are two cases.

    If $s_i \not\in F'$, then define the filter $G := F' \cup \Pi_{\succeq s_{i}}$. By definition
    $G$ contains $F'$ and $G \setminus F' = \Pi_{\succeq s_{i}} \setminus F'$ has $s_{i}$ as a
    minimal element and hence is connected. To prove that $G \setminus F = N \cup (\Pi_{\succeq
    s_{i}} \setminus F')$ is also connected, we note that $N$ and $\Pi_{\succeq s_{i}} \setminus
    F'$ are both connected and the edge $s_{i}s_{i+1}$ in $\Gco(\Pi)$ shows that $G \setminus F$ is
    connected. Lemma~\ref{lem:ord_triangles} now implies that $1_F 1_G$ is an edge that is also
    dependent to $1_F 1_{F'}$ and $|G \setminus F| > |F' \setminus F|$. A contradiction.

    If $s_i \in F$, then the filter $H := F \setminus \Pi_{\preceq s_{i}}$ is a proper subset of
    $F$ and $F \setminus H = F \cap \Pi_{\preceq s_{i}}$ is connected because it has a maximal
    element. The same argument as above shows that $F' \setminus H = N \cup F \cap \Pi_{\preceq
    s_{i}}$ is connected. Lemma~\ref{lem:ord_triangles} again implies that $1_H1_{F'}$ is not
    dependent to $1_\emptyset 1_\Pi$ but $|F' \setminus H| > |F' \setminus F|$. Also a
    contradiction.
\end{proof}

Stanley introduced another polytope associated to a poset. The \Def{chain polytope} $\Cc(\poset)$
is the set of all $x \in [0,1]^n$ such that
\[
    x_{a_1} +
    x_{a_2} +
    \cdots +
    x_{a_k}  \ \le \ 1
\]
for all chains $ a_1 \prec a_2 \prec \dots \prec a_k$. The vertices of $\Cc(\poset)$ are in
  bijection to antichains in $\poset$. Chains and antichains are exactly the cliques and stable
  sets of the comparability graph $\Gco(\poset)$ and it can be shown that $\Cc(\poset) =
  \Stab(\Gco(\poset))$. Hence, from Theorem~\ref{thm:stab} we obtain the following consequence.

\begin{cor}\label{cor:chain}
    Let $\poset$ be a finite poset. The chain polytope $\Cc(\poset)$ is indecomposable if and only if $\Gco(\poset)$ is connected.
\end{cor}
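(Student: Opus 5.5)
The plan is to reduce Corollary~\ref{cor:chain} to Theorem~\ref{thm:stab} through the identification $\Cc(\poset) = \Stab(\Gco(\poset))$. Once this equality is in hand, Theorem~\ref{thm:stab} applied to the simple graph $G = \Gco(\poset)$ says that $\Cc(\poset)$ is indecomposable if and only if $\Gco(\poset)$ is connected, which is exactly the claim. So the real work is to verify the identification, together with the combinatorial dictionary behind it.

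First, for $a \neq b$, two elements form a $2$-element chain exactly when they are comparable, that is, when $ab$ is an edge of $\Gco(\poset)$. Hence a subset $S \subseteq [n]$ is an antichain if and only if it is stable in $\Gco(\poset)$. The chains of $\poset$ are precisely the cliques of $\Gco(\poset)$, since pairwise comparable elements are totally ordered.

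Next, the inclusion $\Stab(\Gco(\poset)) \subseteq \Cc(\poset)$ is immediate. For each antichain $S$, the vector $1_S$ lies in $[0,1]^n$, and $1_S$ meets every chain in at most one element, so every chain inequality holds. By convexity the whole stable set polytope lies in $\Cc(\poset)$.

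For the reverse inclusion, I would use the standard fact that $\Cc(\poset)$ is a $0/1$-polytope whose vertices are the $1_S$ with $S$ an antichain. This is the fact quoted just before the corollary, due to Stanley. Since the paper already asserts this vertex description, I intend to cite it rather than reprove it.

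If a self-contained argument were wanted, this reverse inclusion is the main obstacle. The route I would try is the transfer map $\phi$ from $\Oc(\poset)$ to $\Cc(\poset)$. Using the paper's filter convention (order-preserving $x$, with $x_a \le x_b$ for $a \prec b$), define
\[
    \phi(x)_b = x_b - \max\{x_a : a \prec b\} ,
\]
where the maximum over an empty set is taken to be $0$. The steps are:
\begin{itemize}
    \item check that $\phi$ maps $\Oc(\poset)$ bijectively and piecewise-linearly onto $\Cc(\poset)$;
    \item check that $\phi$ is linear on each region of the canonical triangulation;
    \item conclude that every vertex of $\Cc(\poset)$ is the image $\phi(1_F)$ of some filter vertex $1_F$;
    \item observe that $\phi(1_F) = 1_{\min F}$, and $\min F$ is an antichain.
\end{itemize}

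Alternatively, one can avoid identifying the polytopes exactly. $\Cc(\poset)$ is antiblocking, and its $0/1$-points are exactly the indicator vectors of antichains. If its vertices are $0/1$, then $\Cc(\poset) = P_\Delta$ for $\Delta$ the complex of antichains. Its minimal nonfaces are the comparable pairs, so $\Gex(\Delta) = \Gco(\poset)$, and Theorem~\ref{thm:antiblock} gives the same conclusion.
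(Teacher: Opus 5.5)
Your proposal is correct and follows the paper's route: the paper likewise identifies antichains and chains with the stable sets and cliques of $\Gco(\poset)$, asserts $\Cc(\poset)=\Stab(\Gco(\poset))$, and applies Theorem~\ref{thm:stab}. Your check of the easy inclusion, together with the appeal to Stanley's vertex description (or the transfer map $\phi$ with $\phi(1_F)=1_{\min F}$), fills in the step the paper leaves as ``it can be shown.''
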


\subsection{Flow polytopes}\label{sec:flow}%
Flow polytopes are lattice polytopes whose geometry is controlled by the combinatorics of a
directed graph. They appear, for example, in the study of Kostant partition functions and diagonal
harmonics; see~\cite{BV,LMM}.

Let $D = (V,E)$ be a directed graph. We assume that $D$ does not have directed cycles and there is
a unique source $s$ and a unique sink $t$. For $b \in \RR^V$, a \Def{$b$-flow}, or flow with
netflow vector $b$, is an assignment $f: E \to \RR_{\ge0}$ such that for every node $v \in V$
\[
    \sum_{vu \in E} f(v,u) -
    \sum_{uv \in E} f(u,v)  \ = \ b_v \, .
\]
The collection of $b$-flows constitutes the \Def{flow polytope} $\Flow_D(b) \subset \RR^E$. In
  geometric and algebraic combinatorics, flow polytopes are typically considered for the vector
  $b^0$ with $-b^0_s = b^0_t = 1$ and $b^0_v = 0$ for $v \in V \setminus\{s,t\}$. We will simply
  write $\Flow_D = \Flow_D(b^0)$. In this case, the vertices of $\Flow_D$ are of the form $e_\rho
  \in \{0,1\}^E$, where $\rho$ is a directed $s$--$t$-path. An $s$--$t$-separator is a set $S
  \subseteq V \setminus \{s,t\}$ such that the removal of $S$ leaves no directed $s$--$t$-paths. If
  $D$ has a separator $S = \{r\}$, $D$ can be decomposed into two digraphs $D_1$ and $D_2$ with
  source-sink pairs $(s,r)$ and $(r,t)$. It is straightforward to verify that $\Flow_D =
  \Flow_{D_1} \times \Flow_{D_2}$. Conversely, if $\Flow_D$ is a Cartesian product, then $D$ must
  have an $s$--$t$-separator of size $1$.

\begin{thm}
    Let $D$ be an acyclic digraph with unique source $s$ and sink $t$. Then $\Flow_D$ is indecomposable if and only if $D$ does not have an $s$--$t$-separator of size $1$.
\end{thm}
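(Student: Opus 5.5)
The plan is to reduce the statement to Theorem~\ref{thm:main}. $\Flow_D$ is a $0/1$-polytope whose vertices are the $e_\rho$ for directed $s$--$t$-paths $\rho$. By Theorem~\ref{thm:main} it is indecomposable if and only if it is not a Cartesian product of proper $0/1$-polytopes lying in complementary coordinate subspaces. So it suffices to show two things: a separator $\{r\}$ forces a product, and a product forces a separator of size $1$.

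\emph{Separator gives a product.} Suppose $\{r\}$ is a separator. Every $s$--$t$-path passes through $r$. Let $E_1$ be the arcs lying on some $s$--$r$-path and $E_2$ the arcs lying on some $r$--$t$-path. Acyclicity makes $E_1$ and $E_2$ disjoint, since an arc in both would produce a directed cycle through $r$. We may delete arcs lying on no $s$--$t$-path, as they are constant-zero coordinates. Then concatenation gives a bijection between $s$--$t$-paths and pairs consisting of an $s$--$r$-path and an $r$--$t$-path. Hence $\Flow_D=\Flow_{D_1}\times\Flow_{D_2}$. Both factors are proper: $r\ne s,t$, and if $\Flow_{D_1}$ were a point, the unique $s$--$r$ path would be a single chain. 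Here I need the convention that a path of length $\ge1$ still yields a point factor. So I would phrase the separator condition to exclude the degenerate case, or note that in that case $D$ contains a bridge arc, which is a trivial product factor.

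\emph{Product gives a separator.} Suppose $\Flow_D=P_1\times P_2$ with $E=E_1\sqcup E_2$ and both factors proper; this is the main obstacle. Restricting each $e_\rho$ to $E_1$ and $E_2$ must give independent choices. The key observation is that for any two paths $\rho,\sigma$, the mixed vector $e_{\rho\cap E_1}+e_{\sigma\cap E_2}$ is again the indicator of an $s$--$t$-path. I would first show that every path meets both $E_1$ and $E_2$. If $\rho\subseteq E_1$, then taking $\sigma$ with $\sigma\cap E_2\ne\emptyset$ (it exists since $P_2$ is proper) gives $\rho\cup(\sigma\cap E_2)$, which would have to be a path strictly containing the path $\rho$. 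That is impossible because $s$ has outdegree $1$ in a path.

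Next, I would take a path $\rho$ and look at the vertices where it switches between $E_1$ and $E_2$. I claim there is exactly one switch vertex $r$, the same for all paths. Suppose two paths $\rho,\sigma$ had different switching patterns. The mixed subgraph $(\rho\cap E_1)\cup(\sigma\cap E_2)$ would then have a vertex of in- or outdegree $\ne1$ at a switch point, so it would not be a path. Using this exchange argument with various pairs should force every path to be $E_1$ first, then $E_2$, switching at a common vertex $r$. That vertex is a separator of size $1$.

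The degree bookkeeping at the switch points is where the care lies. The fallback is to use the flow-conservation equations: for the mixed vector, conservation at each vertex $v$ splits into an $E_1$-part and an $E_2$-part whose net flows are independent of the path chosen. Define $b^{(1)}_v$ as the net flow of $e_\rho|_{E_1}$ at $v$. This vector is constant over $\rho$, because otherwise mixing two paths would break conservation. Then $b^{(1)}$ is a netflow vector supported on $s$ and exactly one other vertex $r$, which is where flow passes to $E_2$. Since $b^{(1)}$ is integral with total $0$ and $\rho|_{E_1}$ is a path segment from $s$, that forces $b^{(1)}=-e_s+e_r$ with $r\ne s,t$. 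So every path passes through $r$, and $\{r\}$ is the desired separator.
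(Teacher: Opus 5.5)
Your route is the paper's: reduce to Theorem~\ref{thm:main} and show that a separator $\{r\}$ corresponds to a nontrivial Cartesian product. The paper calls one direction straightforward and only asserts the converse. Two points need repair.

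\emph{The degenerate case.} Your worry is justified, but only your first remedy works: as written, the statement is false when one side of the separator carries a single path. For the digraph with arcs $sr,ra,rb,at,bt$, the set $\{r\}$ is a separator, yet $\mathrm{Flow}_D$ is a segment and hence indecomposable. Observing that a bridge arc is a trivial factor does not help, because a point factor never yields a proper decomposition. What Theorem~\ref{thm:main} actually gives is this: $\mathrm{Flow}_D$ is decomposable if and only if some $r\neq s,t$ lies on every $s$--$t$-path and both the $s$--$r$ part and the $r$--$t$ part admit at least two paths. The paper's one-line argument passes over this as well.

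\emph{The converse.} Your exchange paragraph is only a sketch (``should force''). The fallback then ends with a false claim: $b^{(1)}$ need not be $-e_s+e_r$, because $\rho\cap E_1$ may consist of several maximal subpaths of $\rho$, not necessarily starting at $s$. For example, let $D$ be a series composition of three pieces, each carrying two paths. If $E_1$ is the arc set of the first and third pieces, then $b^{(1)}$ has four support vertices. If $E_1$ is the middle piece, then $s\notin\mathrm{supp}(b^{(1)})$.

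Your key observation, that mixing forces $b^{(1)}$ to be independent of $\rho$, is correct and suffices. The argument runs as follows.
\begin{enumerate}[(a)]
\item The maximal $E_1$-subpaths of $\rho$ have pairwise distinct endpoints, so nothing cancels and $\mathrm{supp}(b^{(1)})$ is exactly the set of these endpoints.
\item Since $\rho$ meets both $E_1$ and $E_2$ (your first step is fine), one of these endpoints is a vertex $r\notin\{s,t\}$ at which $\rho$ switches classes.
\item Constancy gives $r\in\mathrm{supp}(b^{(1)})\subseteq V(\sigma)$ for every $s$--$t$-path $\sigma$, so $\{r\}$ is a separator.
\end{enumerate}

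For the corrected statement you need slightly more. Every path then has the same switching vertices, so $D$ is a series composition of stretches lying alternately in $E_1$ and $E_2$. Since $P_1$ and $P_2$ are proper, some $E_1$-stretch and some $E_2$-stretch each carry two paths. Any switching vertex between these two stretches is then a nondegenerate separator.
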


\bibliographystyle{alpha}
\bibliography{Biblio.bib}

\newcommand{\etalchar}[1]{$^{#1}$}
\begin{thebibliography}{CDG{\etalchar{+}}22}

\bibitem[ACEP20]{Ardila}
Federico Ardila, Federico Castillo, Christopher Eur, and Alexander Postnikov.
\newblock Coxeter submodular functions and deformations of {Coxeter}
  permutahedra.
\newblock {\em Adv. Math.}, 365:36, 2020.
\newblock Id/No 107039.

\bibitem[AHK18]{AHK}
Karim Adiprasito, June Huh, and Eric Katz.
\newblock Hodge theory for combinatorial geometries.
\newblock {\em Ann. Math. (2)}, 188(2):381--452, 2018.

\bibitem[Alt95]{Altmann}
Klaus Altmann.
\newblock Minkowski sums and homogeneous deformations of toric varieties.
\newblock {\em T{\^o}hoku Math. J. (2)}, 47(2):151--184, 1995.

\bibitem[BHNP09]{PermutationPolytopes}
Barbara Baumeister, Christian Haase, Benjamin Nill, and Andreas Paffenholz.
\newblock On permutation polytopes.
\newblock {\em Adv. Math.}, 222(2):431--452, 2009.

\bibitem[BV08]{BV}
Welleda Baldoni and Mich{\`e}le Vergne.
\newblock Kostant partitions functions and flow polytopes.
\newblock {\em Transform. Groups}, 13(3-4):447--469, 2008.

\bibitem[CDG{\etalchar{+}}22]{DefConeCubes}
Federico Castillo, Joseph Doolittle, Bennet Goeckner, Michael~S. Ross, and
  Li~Ying.
\newblock Minkowski summands of cubes.
\newblock {\em Bull. Lond. Math. Soc.}, 54(3):996--1009, 2022.

\bibitem[CLS11]{CoxLittleSchenckToric}
David~A. Cox, John~B. Little, and Henry~K. Schenck.
\newblock {\em Toric varieties}, volume 124 of {\em Graduate Studies in
  Mathematics}.
\newblock American Mathematical Society, Providence, RI, 2011.

\bibitem[Fou16]{Fourier2016}
Ghislain Fourier.
\newblock Marked poset polytopes: {M}inkowski sums, indecomposables, and
  unimodular equivalence.
\newblock {\em J. Pure Appl. Algebra}, 220(2):606--620, 2016.

\bibitem[Ful72]{Fulkerson}
D.~R. Fulkerson.
\newblock Anti-blocking polyhedra.
\newblock {\em J. Comb. Theory, Ser. B}, 12:50--71, 1972.

\bibitem[Gal54]{Gale1954}
David Gale.
\newblock Irreducible convex sets.
\newblock In {\em Proc. of the {ICM}, {A}msterdam, 1954, {V}ol. 2}, pages
  217--218. Erven P. Noordhoff N. V., Groningen, 1954.

\bibitem[Gei81]{Geissinger}
Ladnor Geissinger.
\newblock The face structure of a poset polytope.
\newblock Combinatorics and computing, {Proc}. 3rd {Caribb}. {Conf}., {Cave}
  {Hill}/ {Barbados} 1981, 125-133 (1981)., 1981.

\bibitem[GPT10]{GPT}
Jo{\~a}o Gouveia, Pablo~A. Parrilo, and Rekha~R. Thomas.
\newblock Theta bodies for polynomial ideals.
\newblock {\em SIAM J. Optim.}, 20(4):2097--2118, 2010.

\bibitem[Gr{\"u}03]{Gruenbaum}
Branko Gr{\"u}nbaum.
\newblock {\em Convex polytopes. {Prepared} by {Volker} {Kaibel}, {Victor}
  {Klee}, and {G{\"u}nter} {M}. {Ziegler}}, volume 221 of {\em Grad. Texts
  Math.}
\newblock New York, NY: Springer, 2nd ed. edition, 2003.

\bibitem[HHO18]{BinomialIdeals}
J{\"u}rgen Herzog, Takayuki Hibi, and Hidefumi Ohsugi.
\newblock {\em Binomial ideals}, volume 279 of {\em Grad. Texts Math.}
\newblock Cham: Springer, 2018.

\bibitem[Kal82]{Kallay1982}
Michael Kallay.
\newblock Indecomposable polytopes.
\newblock {\em Israel J. Math.}, 41(3):235--243, 1982.

\bibitem[KW00]{Kaibel}
Volker Kaibel and Martin Wolff.
\newblock Simple 0/1-polytopes.
\newblock {\em Eur. J. Comb.}, 21(1):139--144, 2000.

\bibitem[LMM19]{LMM}
Ricky~Ini Liu, Alejandro~H. Morales, and Karola M{\'e}sz{\'a}ros.
\newblock Flow polytopes and the space of diagonal harmonics.
\newblock {\em Can. J. Math.}, 71(6):1495--1521, 2019.

\bibitem[LPP25]{LPP}
Georg Loho, Arnau Padrol, and Germain Poullot.
\newblock Many rays of the submodular cone, 2025.
\newblock arXiv preprint
  \href{https://arxiv.org/abs/2510.03177}{\texttt{arXiv:2510.03177}}.

\bibitem[McM73]{McMullen-reps}
P.~McMullen.
\newblock Representations of polytopes and polyhedral sets.
\newblock {\em Geom. Dedicata}, 2:83--99, 1973.

\bibitem[McM87]{McMullen1987}
Peter McMullen.
\newblock Indecomposable convex polytopes.
\newblock {\em Israel J. Math.}, 58(3):321--323, 1987.

\bibitem[McM93]{McMullen-simple}
Peter McMullen.
\newblock On simple polytopes.
\newblock {\em Invent. Math.}, 113(2):419--444, 1993.

\bibitem[Mor25]{Mori}
Aki Mori.
\newblock Triangular faces of the order and chain polytope of a maximal ranked
  poset.
\newblock {\em Discrete Math.}, 348(8):6, 2025.
\newblock Id/No 114480.

\bibitem[MS24]{Inscribed1}
Sebastian Manecke and Raman Sanyal.
\newblock Inscribable fans i: Inscribed cones and virtual polytopes.
\newblock {\em Mathematika}, 70(4):e12270, 2024.

\bibitem[Ngu86]{Ngu86}
Hien~Quang Nguyen.
\newblock Semimodular functions.
\newblock 26:272--297, 1986.

\bibitem[OH98]{OH}
Hidefumi Ohsugi and Takayuki Hibi.
\newblock Normal polytopes arising from finite graphs.
\newblock {\em J. Algebra}, 207(2):409--426, 1998.

\bibitem[Pos09]{Postnikov}
Alexander Postnikov.
\newblock Permutohedra, associahedra, and beyond.
\newblock {\em Int. Math. Res. Not. IMRN}, (6):1026--1106, 2009.

\bibitem[PP26]{PP}
Arnau Padrol and Germain Poullot.
\newblock The graph of implicit edge dependencies for indecomposability and
  beyond, 2026.
\newblock arXiv preprint
  \href{https://arxiv.org/abs/2512.05307}{\texttt{arXiv:2512.05307}}.

\bibitem[PPR23]{PPR}
Arnau Padrol, Vincent Pilaud, and Julian Ritter.
\newblock Shard polytopes.
\newblock {\em Int. Math. Res. Not. IMRN}, (9):7686--7796, 2023.

\bibitem[PRW08]{PostnikovReinerWilliams}
Alexander Postnikov, Victor Reiner, and Lauren~K. Williams.
\newblock Faces of generalized permutohedra.
\newblock {\em Doc.~Math.}, 13:207--273, 2008.

\bibitem[PY08]{PrzeslawskiYost2008}
Krzysztof Przes\l{}awski and David Yost.
\newblock Decomposability of polytopes.
\newblock {\em Discrete Comput. Geom.}, 39(1-3):460--468, 2008.

\bibitem[PY16]{PrzeslawskiYost2016}
Krzysztof Przes\l{}awski and David Yost.
\newblock More indecomposable polyhedra.
\newblock {\em Extracta Math.}, 31(2):169--188, 2016.

\bibitem[RW73]{RosenmullerWeidner1973-ExtremeConvexSetFunctions}
Joachim Rosenm\"uller and Hans~G{\" u}nther Weidner.
\newblock A class of extreme convex set functions with finite carrier.
\newblock {\em Advances in Math.}, 10:1--38, 1973.

\bibitem[Sch03]{Schrijver}
Alexander Schrijver.
\newblock {\em Combinatorial optimization. {Polyhedra} and efficiency (3
  volumes)}, volume~24 of {\em Algorithms Comb.}
\newblock Berlin: Springer, 2003.

\bibitem[She63]{Shephard}
Geoffrey~C. Shephard.
\newblock Decomposable convex polyhedra.
\newblock {\em Mathematika}, 10:89--95, 1963.

\bibitem[Sta80]{Stanley-compressed}
Richard~P. Stanley.
\newblock Decompositions of rational convex polytopes.
\newblock Ann. {Discrete} {Math}. 6, 333-342 (1980)., 1980.

\bibitem[Sta86]{S86}
Richard~P. Stanley.
\newblock Two poset polytopes.
\newblock {\em Discrete Comput. Geom.}, 1(1):9--23, 1986.

\bibitem[Sul06]{Sullivant}
Seth Sullivant.
\newblock Compressed polytopes and statistical disclosure limitation.
\newblock {\em T{\^o}hoku Math. J. (2)}, 58(3):433--445, 2006.

\bibitem[SWZ09]{SWZ}
Raman Sanyal, Axel Werner, and G{\"u}nter~M. Ziegler.
\newblock On {Kalai}'s conjectures concerning centrally symmetric polytopes.
\newblock {\em Discrete Comput. Geom.}, 41(2):183--198, 2009.

\bibitem[Zie95]{ziegler}
G{\"u}nter~M. Ziegler.
\newblock {\em Lectures on polytopes}, volume 152 of {\em Grad. Texts Math.}
\newblock Berlin: Springer-Verlag, 1995.

\end{thebibliography}
\end{document}